\documentclass[11pt,reqno]{amsart}
\usepackage{latexsym}
\usepackage{amsmath}
\usepackage{amssymb, amscd, amsthm}
\usepackage[all]{xy}
\usepackage{multirow}

\newtheorem{theorem}{{Theorem}}[section]
\newtheorem*{theorema}{{Theorem A}}
\newtheorem*{theoremb}{{Theorem B}}
\newtheorem*{theoremc}{{Theorem C}}

\newtheorem{corollary}[theorem]{{Corollary}}
\newtheorem{lemma}[theorem]{{Lemma}}
\newtheorem{proposition}[theorem]{Proposition}

\theoremstyle{remark}
\newtheorem*{remark}{Remark}

\newcommand{\F}{\ensuremath{\mathbb F}}
\newcommand{\Z}{\ensuremath{\mathbb Z}}
\newcommand{\N}{\ensuremath{\mathbb N}}

 \newcommand{\bl}{\mathbf{\lambda}}
 \newcommand{\bk}{\mathbf{k}}
\newcommand{\bx}{\mathbf{x}}
\newcommand{\bb}{\mathbf{b}}

\begin{document}

\title{Counting the solutions of $\lambda_1 x_1^{k_1}+\cdots +\lambda_t x_t^{k_t}\equiv c\bmod{n}$}
\author{Songsong Li}\author{Yi Ouyang}
\address{Wu Wen-Tsun Key Laboratory of Mathematics,  School of Mathematical Sciences, University of Science and Technology of China, Hefei, Anhui 230026, China}
\email{songsli@mail.ustc.edu.cn, yiouyang@ustc.edu.cn}
\subjclass[2010]{Primary 11B13, 11L03, 11L05}

\date{}
 \maketitle

\begin{abstract}
 Given a polynomial $Q(x_1,\cdots, x_t)=\lambda_1 x_1^{k_1}+\cdots +\lambda_t x_t^{k_t}$, for every $c\in \Z$ and $n\geq 2$, we study the number of solutions $N_J(Q;c,n)$ of the congruence equation $Q(x_1,\cdots, x_t)\equiv c\bmod{n}$ in $(\Z/n\Z)^t$ such that  $x_i\in (\Z/n\Z)^\times$ for $i\in J\subseteq I= \{1,\cdots, t\}$. We deduce formulas and an algorithm to study $N_J(Q; c,p^a)$ for $p$ any prime number and $a\geq 1$ any integer. As consequences of our main results, we completely solve: the counting problem of $Q(x_i)=\sum\limits_{i\in I}\lambda_i x_i$ for any prime $p$ and any subset $J$ of $I$;  the  counting problem of $Q(x_i)=\sum\limits_{i\in I}\lambda_i x^2_i$ in the case $t=2$ for any $p$ and $J$, and the case $t$ general for any $p$ and $J$ satisfying $\min\{v_p(\lambda_i)\mid i\in I\}=\min\{v_p(\lambda_i)\mid i\in J\}$; the  counting problem of $Q(x_i)=\sum\limits_{i\in I}\lambda_i x^k_i$ in the case $t=2$ for any $p\nmid k$ and any $J$, and in the case $t$ general  for any $p\nmid k$ and $J$ satisfying $\min\{v_p(\lambda_i)\mid i\in I\}=\min\{v_p(\lambda_i)\mid i\in J\}$.
\end{abstract}

\section{Introduction and Main results}
\subsection{Introduction} \label{sec_intro}
Given a polynomial
 \[ Q(x_1,\cdots, x_t)=\lambda_1 x_1^{k_1}+\cdots +\lambda_t x_t^{k_t}\in \Z[x_1,\cdots, x_t]. \]
Let $\bl=(\lambda_1,\cdots, \lambda_t)\in (\Z-\{0\})^t$ and $\bk=(k_1,\cdots, k_t)\in \Z_{\geq 1}^t$. For any $c\in \Z$ and $n\geq 2$, and for a subset $J$ of $I=\{1,\cdots, t\}$, denote by $\Gamma_{J}(c, n)=\Gamma_J(Q;c,n)=\Gamma_{J}(\bl,\bk;  c, n)$  the set of solutions $(x_1,\cdots, x_t)$ of the congruence equation
 \[ Q(x_1,\cdots, x_t) \equiv c\bmod{n} \]
such that $x_j\in (\Z/n\Z)^\times$ for $j\in J$, and by $N_J(Q; c,n)$ the cardinality of $\Gamma_J(Q; c,n)$. In particular, write $\Gamma$, $N$, $\Gamma^*$ and $N^*$ for $\Gamma_{\emptyset}$, $N_{\emptyset}$, $\Gamma_{I}$  and $N_{I}$ respectively.
The problem to determine $N_{J}(Q;  c, n)$   has been studied by many authors extensively in various special cases:
\begin{itemize}
\item[(i)] The linear  case $\bk=(1,\cdots, 1)$.  For $J=I$, this is a problem proposed by H. Rademacher\cite{rade} in 1925 and answered by A. Brauer\cite{brau} in 1926, and recovered by many authors later from time to time. For $J=I$ and  $\bl=(\lambda_i)$ where the $\lambda_i$'s are divisors of $n$, this is the work of Sun and Yang\cite{sun-yang} in 2014.

\item[(ii)] The quadratic case $\bk=(2,\cdots, 2)$. For  $J=\emptyset$, this is studied in the work of T\'oth\cite{toth} in 2014. For $t=2$ and $\bl=(1,1)$, this is the work of Yang and Tang\cite{yang-tang} in 2015. For $t=2$, $\bl$ arbitrary and $J=I$, this is the work of Sun and Cheng~\cite{sun-cheng} in 2016. For general $t$,  $\bl=(1,\cdots,1)$ and $J=I$, this is the recent work of Mollahajiaghaei~\cite{mol}. See also \cite{cal} for more development.

\item[(iii)] The case $t=2$, $\bl=(1,1)$ and $\bk=(k, k)$. Partial results were obtained by Deaconescu and Du~\cite{deac}.
\end{itemize}

\subsection{Notations.}
Before stating our main results,  let us fix the following notations.

In this paper, $p$ is always a prime number and $v_p$ is the $p$-adic valuation, $a$ is always  a positive integer and $I$ is the set $\{1,\cdots, t\}$.

For a set $X$, $\# X$ or $|X|$ means the cardinality of $X$. For two subsets $X$ and $Y$ of the set $U$, the difference set $X- Y$ is the set $\{x\in U\mid x\in X,\ x\notin Y\}$.

For the congruence equation
 \[Q(x_1,\cdots, x_t)=\lambda_1 x_1^{k_1}+\cdots +\lambda_t x_t^{k_t}\equiv c\bmod n,\quad (c\in \Z,\ n\in \Z_{\geq 2}) \]
we call $t$, $\bk$ and $n$ its \emph{dimension},  \emph{degree} and \emph{level}  respectively.

For $J$ a nonempty subset of $I$,
the \emph{depth} $d_{p,J}=d_{p,J}(Q)=d_{p,J}(\bl,\bk)$ of $Q$  at prime $p$ associated to $J$ is defined by
 \[ d_{p,J}=\begin{cases} \min\limits_{i\in J}\{v_p(\lambda_ik_i)+1\}, &\text{if}\ p\ \text{odd};\\ \min\limits_{i\in J}\{v_2(\lambda_ik_i)+2\ \text{if $2\mid k_i$},\ v_2(\lambda_ik_i)+1\ \text{if $2\nmid k_i$}\}, &\text{if}\ p=2. \end{cases} \]
Write $d_p$ for $d_{p,I}$ and call it the depth of  of $Q$ at $p$.

For $J$ a nonempty subset of $I$, let $\bl_J=(\lambda_i)_{i\in J}$, $\bk_J=(k_i)_{i\in J}$ and $Q_J=\sum\limits_{j\in J}\lambda_j x^{k_j}\in \Z[x_j:\ j\in J]$. Set $Q_\emptyset=0$ and
 \[ \N_{\emptyset}(0; c,p^a)=N^*(0; c,p^a)=\begin{cases} 1, & \text{if}\ p^a\mid c;\\ 0, & \text{if}\ p^a\nmid c. \end{cases} \]

If $Q$ and $(\bl,\bk)$  are clear from the context, we may drop them in our notations.

\subsection{Main results}
Suppose $n$ has the  prime decomposition
  \[ n=\prod_{p\mid n} p^{n_p}. \]
By Chinese Remainder Theorem, the set of solutions of  $Q(x_1, \cdots,  x_t) \equiv c\bmod n$ is in one-to-one correspondence with the product set of solutions of the equations $Q(x_1,\cdots, x_t)\equiv c\bmod p^{n_p}$ for primes $p\mid n$. Moreover, $x\in (\Z/n\Z)^\times$ if and only if $x\in (\Z/p^{n_p})^\times$ for all $p\mid n$. Thus for any $J\subseteq I$, we have the product formula
 \begin{equation} N_J(Q; c, n)=\prod_{p\mid n} N_J(Q; c, p^{n_p}).
 \end{equation}
So to compute $N_J(Q; c,n)$, it suffices to study the prime power case $N_J(Q; c,p^a)$. By simple argument (as seen in Proposition~\ref{prop:easy123}(2)), we may reduce $Q$ to the case  $p\nmid \lambda_i$ for some $i\in I$, which we call $Q$ is \emph{reduced at $p$}.

Our first main result is the following theorem (which we call the decomposition formula):

\begin{theorema}  Given the polynomial $Q$. For subsets $J_1\subsetneq J_2\subseteq I$, let
 \[ B_i(J_1,J_2; a)=\begin{cases} \{0\}, &\text{if}\ i\notin J_2-J_1;\\  \{0,\cdots, a\}, & \text{if}\ i\in J_2-J_1,\end{cases} \quad  B(J_1, J_2;a)=\prod\limits_{i=1}^t B_i(J_1,J_2; a).\]
For $\bb\in B(\emptyset, I; a)$, but $\bb\neq (a,\cdots, a)$, let
 \[ J_\bb=\{i\in I\mid b_i<a\},  \quad
  Q_\bb=\sum_{j\in J_\bb} \lambda_j p^{k_j b_j} x^{k_j}, \quad  s(\bb)= \sum\limits_{j\in J_\bb} b_j.\]
If $\bb= (a,\cdots, a)$, let  $J_\bb=\emptyset$, $Q_\bb=0$ and $s(\bb)=0$. Then we have the \emph{decomposition formula}
 \begin{equation} \label{eq:nj} N_{J_1}(Q; c, p^a)=\sum_{\bb\in B(J_1, J_2; a)} p^{-s(\bb)}N_{J_2\cap J_\bb}(Q_\bb; c, p^a).
 \end{equation}
 \end{theorema}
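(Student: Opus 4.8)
The plan is to partition the solution set $\Gamma_{J_1}(Q;c,p^a)$ according to the $p$-adic valuations of the coordinates indexed by $J_2-J_1$ and then to compute the size of each cell by a change of variables. Concretely, to a solution $(x_1,\dots,x_t)\in\Gamma_{J_1}(Q;c,p^a)$ I would attach the vector $\bb\in B(J_1,J_2;a)$ with $b_i=\min\{v_p(x_i),a\}$ for $i\in J_2-J_1$ and $b_i=0$ otherwise (this is forced and consistent, since for $i\in J_1$ the coordinate $x_i$ is a unit and $J_1\cap(J_2-J_1)=\emptyset$). This yields a partition $\Gamma_{J_1}(Q;c,p^a)=\bigsqcup_{\bb\in B(J_1,J_2;a)}\Gamma_\bb$, where $\Gamma_\bb$ is the set of $\bx$ with $x_i$ a unit for $i\in J_1$, with $v_p(x_i)=b_i$ for those $i\in J_2-J_1$ having $b_i<a$, with $x_i\equiv 0$ for those $i\in J_2-J_1$ having $b_i=a$, and with $Q(\bx)\equiv c\bmod p^a$. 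It then suffices to prove the per-cell identity $\#\Gamma_\bb=p^{-s(\bb)}N_{J_2\cap J_\bb}(Q_\bb;c,p^a)$ for every $\bb$ and to sum over $\bb$.

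To evaluate $\#\Gamma_\bb$ I would substitute $x_i=p^{b_i}m_i$ for each $i\in J_2-J_1$ with $b_i<a$ — a bijection between $\{x_i\bmod p^a:v_p(x_i)=b_i\}$ and $(\Z/p^{a-b_i}\Z)^\times$ — set $x_i=0$ for each $i\in J_2-J_1$ with $b_i=a$, and leave $x_i$ untouched for $i\notin J_2-J_1$. Since $a\geq 1$ forces $I-(J_2-J_1)\subseteq J_\bb$, and $(J_2-J_1)\cap J_\bb=\{i\in J_2-J_1:b_i<a\}$, a term-by-term check shows that after this substitution $Q(\bx)\equiv c$ becomes $Q_\bb\equiv c\bmod p^a$ in the variables indexed by $J_\bb$, carrying the unit constraints at $J_1$ together with the conditions $m_i\in(\Z/p^{a-b_i}\Z)^\times$ for $i\in(J_2-J_1)\cap J_\bb$. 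Using $J_1\subseteq J_\bb$ one gets $J_2\cap J_\bb=J_1\cup\big((J_2-J_1)\cap J_\bb\big)$, so the only difference between $\#\Gamma_\bb$ and $N_{J_2\cap J_\bb}(Q_\bb;c,p^a)$ is that in the latter the variables indexed by $(J_2-J_1)\cap J_\bb$ run over $(\Z/p^a\Z)^\times$ rather than over $(\Z/p^{a-b_i}\Z)^\times$.

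The bridge between the two moduli is the elementary remark that, since $k_i\geq 1$ and $b_i<a$ give $k_ib_i+(a-b_i)\geq a$, the residue $\lambda_ip^{k_ib_i}y^{k_i}\bmod p^a$ depends only on $y\bmod p^{a-b_i}$, while the reduction $(\Z/p^a\Z)^\times\twoheadrightarrow(\Z/p^{a-b_i}\Z)^\times$ is everywhere $p^{b_i}$-to-one (as $a-b_i\geq 1$). Hence, replacing in $N_{J_2\cap J_\bb}(Q_\bb;c,p^a)$ each unit variable indexed by $(J_2-J_1)\cap J_\bb$ by its reduction multiplies the count by $\prod_{i\in(J_2-J_1)\cap J_\bb}p^{b_i}=p^{s(\bb)}$ (recall $b_i=0$ off $J_2-J_1$), giving $N_{J_2\cap J_\bb}(Q_\bb;c,p^a)=p^{s(\bb)}\#\Gamma_\bb$, which is the per-cell identity; summing over $\bb$ then yields \eqref{eq:nj}. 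The one degenerate cell is $\bb=(a,\dots,a)$, which can occur only when $J_1=\emptyset$ and $J_2=I$: there $\Gamma_\bb=\{\bx:x_i\equiv 0\ \forall i\}$ contributes $1$ or $0$ according as $p^a\mid c$ or not, matching $N^*(0;c,p^a)$ and the conventions $Q_\emptyset=0$, $s(\bb)=0$.

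I expect the only genuine difficulty to be notational rather than conceptual: keeping the three index sets $J_\bb$, $J_2\cap J_\bb$ and $(J_2-J_1)\cap J_\bb$ straight, checking that the substitution transports $Q$ to $Q_\bb$ monomial by monomial (in particular that monomials with $b_i=a$ simply disappear), and making the change-of-modulus step for the unit variables precise; no input beyond elementary counting in $(\Z/p^a\Z)^\times$ is needed.
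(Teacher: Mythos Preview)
Your proposal is correct and follows essentially the same route as the paper's proof: both partition $\Gamma_{J_1}(Q;c,p^a)$ by the $p$-adic valuations of the coordinates indexed by $J_2-J_1$, substitute $x_i=p^{b_i}\tilde x_i$ to pass from $Q$ to $Q_\bb$, and then use that the reduction $(\Z/p^a\Z)^\times\to(\Z/p^{a-b_i}\Z)^\times$ is $p^{b_i}$-to-one together with the observation that $\lambda_i p^{k_ib_i}y^{k_i}\bmod p^a$ depends only on $y\bmod p^{a-b_i}$ to produce the factor $p^{s(\bb)}$. Your bookkeeping of the index sets $J_\bb$, $J_2\cap J_\bb$, and $(J_2-J_1)\cap J_\bb$, and your treatment of the degenerate cell $\bb=(a,\dots,a)$, match the paper's argument.
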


Our next two results are  consequences of the following \emph{lifting formula}
 \begin{equation} N_J(Q; c, p^{a+1})=p^{t-1} N_J(Q; c, p^{a})
 \end{equation}
for $a$ sufficiently large under various assumptions. We shall establish this formula by simple $p$-adic analysis, not by the more complicated exponential sum argument employed by other authors. More precisely, we have

\begin{theoremb} Given the polynomial $Q$, and assume it is reduced at prime $p$. Then

(1) For $a\geq d_{p,J}$ and $c\in \Z$,
 \begin{equation} \label{eq:power1x1j}  N_J(Q; c,p^a)=
 p^{(t-1)(a-d_{p,J})} N_J(Q; c,p^{d_{p,J}}). \end{equation}

 (2) For $a\leq d_p=d_{p,I}$, the map
  \[ \varphi_{a}: (\F_p)^t\rightarrow \Z/p^a\Z,\ (a_1,\cdots, a_t)\mapsto Q(\alpha_1,\cdots, \alpha_t)\bmod p^a, \]
 where $\alpha_i\in \Z$ is any lifting of $a_i\in \F_p$, is well defined. Let $\varphi_{a, J}$ be the restriction of $\varphi_a$ on $\prod\limits_{i\in I-J} \F_p\times \prod\limits_{i\in J} \F^\times_p$, then
 \begin{equation} \label{eq:power112j}  N_J(Q; c,p^a)=
 p^{(a-1)t}\# \varphi^{-1}_{a,J}(c\bmod p^a). \end{equation}
In particular, if $p=2$ and $a\leq d_2$,
 \begin{equation} \# \varphi^{-1}_{a,J}(c\bmod 2^a)=\# \{T\subseteq\{1,\cdots,t\}\mid
 T\supseteq J,\ v_2(\sum\limits_{i\in T}\lambda_i-c)\geq a\}.
 \end{equation}
\end{theoremb}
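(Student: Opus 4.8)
The plan is to prove (1) and (2) together by $p$-adic analysis of the map $Q$, treating the unit variables separately from the free variables. First I would set up notation: fix $J\subseteq I$, write each variable $x_i = \alpha_i + p^{a-?}\cdot(\text{higher})$ and study how $Q(x_1,\dots,x_t)\bmod p^{a+1}$ depends on the reductions mod lower powers of $p$. The key local observation is the following: if $f(x)=\lambda x^k$, then for $\alpha\in\Z$ with $v_p(\alpha)=0$ (the unit case) we have $f(\alpha+p^{m}y)\equiv f(\alpha)+\lambda k\alpha^{k-1}p^m y \bmod p^{2m}$ (and mod $p^{m+1}$ it suffices that $m\geq 1$ for $p$ odd, $m\geq 2$ for $p=2$ with $k$ even, because of the binomial coefficient $\binom{k}{2}$ carrying a factor of $2$), so that $v_p(f'(\alpha)) = v_p(\lambda k)$ exactly controls whether the derivative is a unit times a $p$-power; while for $\alpha\equiv 0$, $f(\alpha+p^m y) = \lambda p^{mk}(\text{unit-or-zero})^k$, which is why the free variables with $b_i$ small get absorbed into the lower-level equations $Q_\bb$ of Theorem A. The threshold $d_{p,J}$ is precisely the first $a$ at which, for every $i\in J$, the term $\lambda_i x_i^{k_i}$ with $x_i$ a unit becomes "smooth" in the sense that $f_i(\alpha_i + p^{a-1}y_i)\equiv f_i(\alpha_i) + \lambda_i k_i\alpha_i^{k_i-1}p^{a-1}y_i\bmod p^a$ and $\lambda_ik_i\alpha_i^{k_i-1}p^{a-1}$ genuinely hits all residues divisible by $p^{a-1}$ — hence a Hensel-type surjectivity/fiber-counting argument kicks in.

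For part (1), I would prove the lifting formula $N_J(Q;c,p^{a+1}) = p^{t-1}N_J(Q;c,p^a)$ for all $a\geq d_{p,J}$ by a fibration argument: the reduction map $(\Z/p^{a+1})^t \to (\Z/p^a)^t$ is $p^t$-to-one; I claim that over each solution $(\bar x_1,\dots,\bar x_t)\bmod p^a$ lying in $\Gamma_J(Q;c,p^a)$, exactly $p^{t-1}$ of the $p^t$ lifts satisfy $Q\equiv c\bmod p^{a+1}$ and remain units in the $J$-coordinates. Since $a\geq d_{p,J}\geq 1$, unit coordinates lift to unit coordinates freely, so the unit condition is automatic; the remaining constraint is a single linear congruence mod $p$ in the $t$ "lift parameters" $y_1,\dots,y_t$, namely $\sum_i \lambda_i k_i \alpha_i^{k_i-1} p^{a-1} y_i \equiv c - Q(\bar x)\bmod p^{a+1}$, i.e. $\sum_i (\lambda_i k_i \alpha_i^{k_i-1}/p^{v_p(\lambda_ik_i)})\, y_i \equiv (\text{const})\bmod p$ after dividing; because $a\geq d_{p,J}=\min_{i\in J}(v_p(\lambda_ik_i)+1)$, at least one coefficient (for $i$ achieving the minimum, where the coordinate is a unit) is a $p$-unit, so this linear congruence has exactly $p^{t-1}$ solutions in $(\F_p)^t$. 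Here I must be careful with the $p=2$, $k_i$ even case: the expansion $f_i(\alpha_i+2^{a-1}y_i)$ acquires a $\binom{k_i}{2}\lambda_i\alpha_i^{k_i-2}2^{2(a-1)}y_i^2$ term, which lies in $2^{a+1}\Z$ exactly when $2(a-1)\geq a+1$, i.e. $a\geq 3$; the definition of $d_2$ with its "$+2$" shift is exactly calibrated so that $a\geq d_{2,J}$ guarantees this — this dichotomy is the main technical obstacle and the reason the $p=2$ case must be segregated throughout. Iterating the lifting formula from $a=d_{p,J}$ upward gives \eqref{eq:power1x1j}.

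For part (2), the hypothesis $a\leq d_p$ means that for \emph{every} $i\in I$ (not just $i\in J$), we have $a-1 \leq v_p(\lambda_ik_i)$ for $p$ odd, respectively $a-2\leq v_2(\lambda_ik_i)$ when $k_i$ even and $a-1\leq v_2(\lambda_ik_i)$ when $k_i$ odd; in either case the linear term $\lambda_i k_i\alpha_i^{k_i-1}p^{a-1}y_i$ in the expansion of $f_i(\alpha_i+p^{a-1}y_i)$ already lies in $p^a\Z$ (and the quadratic and higher terms do too, by the same valuation bookkeeping as above, including the $p=2$ adjustment). Hence $Q(x_1,\dots,x_t)\bmod p^a$ depends only on $(x_1\bmod p,\dots,x_t\bmod p)$, which is exactly the assertion that $\varphi_a$ is well-defined. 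The fiber of the reduction $(\Z/p^a)^t\to(\F_p)^t$ over any point has size $p^{(a-1)t}$, and whether a point of $(\Z/p^a)^t$ lies in $\Gamma_J(Q;c,p^a)$ depends only on its image in $(\F_p)^t$ (the condition $x_j\in(\Z/p^a)^\times$ being equivalent to $x_j\bmod p\in\F_p^\times$), so $N_J(Q;c,p^a) = p^{(a-1)t}\#\varphi_{a,J}^{-1}(c\bmod p^a)$, which is \eqref{eq:power112j}.

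Finally, for the $p=2$, $a\leq d_2$ refinement: since $a\leq d_2$, for every $i$ with $k_i$ even we have $v_2(\lambda_i)\geq v_2(\lambda_i k_i)\geq a-2$ but more to the point $2\mid k_i$ forces $v_2(\lambda_i k_i)\geq v_2(\lambda_i)+1$, so one checks directly that for any $\alpha\in\Z$, $\lambda_i\alpha^{k_i}\equiv \lambda_i(\alpha\bmod 2)\bmod 2^a$ — that is, $\lambda_i\alpha^{k_i}\equiv 0$ if $\alpha$ even and $\equiv\lambda_i$ if $\alpha$ odd, modulo $2^a$ — using that squares of odd numbers are $\equiv 1\bmod 8$ and the valuation of $\lambda_i$ is large enough; for $k_i$ odd, $\lambda_i\alpha^{k_i}\equiv \lambda_i\alpha\bmod 2^a$ and since $a-1\leq v_2(\lambda_i k_i)=v_2(\lambda_i)$ we again get $\lambda_i\alpha^{k_i}\equiv 0$ or $\lambda_i\bmod 2^a$ according to the parity of $\alpha$. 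Therefore $\varphi_a(a_1,\dots,a_t) = \sum_{i:\,a_i=1}\lambda_i\bmod 2^a$, and imposing $a_i=1$ for all $i\in J$ (the unit condition) while letting the other $a_i\in\{0,1\}$ range freely, the preimage $\varphi_{a,J}^{-1}(c\bmod 2^a)$ is in bijection with the set of $T=\{i: a_i=1\}$ satisfying $T\supseteq J$ and $\sum_{i\in T}\lambda_i\equiv c\bmod 2^a$, i.e. $v_2(\sum_{i\in T}\lambda_i - c)\geq a$. The main obstacle throughout is the careful, case-split valuation bookkeeping for $p=2$; once the expansion lemma for $f_i(\alpha+p^m y)$ is proved cleanly with the correct thresholds, parts (1) and (2) follow by the uniform fibration/linear-algebra argument sketched above.
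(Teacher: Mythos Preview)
Your treatment of part~(2) and the $p=2$ refinement is correct and matches the paper's argument closely.

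Part~(1), however, has a real gap. You lift $(\bar x_1,\dots,\bar x_t)\in\Gamma_J(c,p^a)$ to level $p^{a+1}$ via $x_i=\bar x_i+p^{a}y_i$ and reduce to a single linear congruence $\sum_i \lambda_i k_i \bar x_i^{\,k_i-1} y_i \equiv r\pmod p$, asserting that for the minimising $j\in J$ the coefficient $\lambda_jk_j\bar x_j^{\,k_j-1}$ is a $p$-unit. But its valuation is exactly $v_p(\lambda_jk_j)=d_{p,J}-1$, which is zero only when $d_{p,J}=1$; the hypothesis $a\ge d_{p,J}$ is irrelevant to whether that coefficient is a unit. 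When $d_{p,J}>1$ every $J$-coefficient vanishes mod $p$, and the non-$J$ coefficients can vanish too (whenever $\bar x_i\equiv 0$), so your linear form may be identically zero. For instance with $p=3$, $Q=x_1^3+3x_2$, $J=\{2\}$, $c=3$ (so $d_{3,J}=2$): over $(0,1)\in\Gamma_{\{2\}}(3,9)$ all nine lifts to $\Z/27$ are solutions, over $(0,4)$ none are. The fibers are \emph{not} uniformly of size $p^{t-1}$, even though the total count obeys the lifting formula.

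What you are missing is the paper's key device: shift only the minimising coordinate, and by $p^{a-d_j-1}$ rather than by $p^{a-1}$, where $d_j=v_p(\lambda_jk_j)$. Lemma~\ref{lemma:easy} then gives
\[
Q(x_1,\dots,x_j+u\,p^{a-d_j-1},\dots,x_t)\equiv Q(x_1,\dots,x_t)+\frac{\lambda_jk_j}{p^{d_j}}\,x_j^{\,k_j-1}\,u\cdot p^{a-1}\pmod{p^{a}},
\]
and now $\lambda_jk_j/p^{d_j}$ genuinely \emph{is} a unit, so exactly one $u\in\{0,\dots,p-1\}$ lands in $\Gamma_J(c,p^{a})$. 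The hypothesis $a>d_{p,J}=d_j+1$ enters precisely to ensure $a-d_j-1\ge 1$, so that the shifted $x_j$ remains a unit and the expansion in the Lemma is valid. (Your parenthetical ``after dividing by $p^{v_p(\lambda_ik_i)}$'' hints at this compensation, but dividing a sum whose terms carry different valuations by different powers of $p$ does not yield a single congruence; the trick has to be implemented as a shift in the variable, not as a rescaling of the equation.)
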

\begin{theoremc}  Given polynomial $Q$ and prime $p$. Let $f_p=\max\{v_p(k_i)+1\}$ (or $3$ if $p=2$ and $\max\{v_2(k_i)\}=1$). For integer $c\neq 0$, let $c_p$ be the $p$-adic valuation of $c$. Then for any $a\geq 1$,  any $J\subseteq I$ (empty or not), $f\geq f_p$ and any $x\in \Z/p^a\Z$,
 \begin{equation} \label{eq:powerc1}  N_J(Q; c(1+p^{f}x),p^a)=
  N_J(Q; c,p^a). \end{equation}
In particular, for $a\geq c_p+f_p$,
 \begin{equation} \label{eq:powerc1}  N_J(Q; c,p^a)=
 p^{(t-1)(a-c_p-f_p)} N_J(Q; c,p^{c_p+f_p}). \end{equation}
Thus $N_J(Q; c,p^a)$ as $a$ varies is completely determined by $N_J(c, p^a)$ for $a\leq c_p+f_p$.
\end{theoremc}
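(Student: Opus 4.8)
The plan is to realise the identity $N_J(Q;c(1+p^fx),p^a)=N_J(Q;c,p^a)$ as coming from an explicit rescaling bijection between the two solution sets, and then to bootstrap the geometric formula for $a\ge c_p+f_p$ from it by a reduction--fibre count.

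\emph{Reductions and the key lemma.} If $f>a$ then $1+p^fx\equiv1\bmod p^a$, hence $c(1+p^fx)\equiv c\bmod p^a$ and there is nothing to prove; so I may assume $a\ge f\ge f_p$. The crux is the claim that \emph{every element of $\Z/p^a\Z$ congruent to $1$ modulo $p^{f_p}$ is a $k_i$-th power in $(\Z/p^a\Z)^\times$, for each $i\in I$}. For odd $p$ this is the reduction modulo $p^a$ of the inclusion $1+p^{v_p(k_i)+1}\Z_p\subseteq(\Z_p^\times)^{k_i}$: writing $k_i=p^{e_i}m_i$ with $p\nmid m_i$, the $m_i$-th power map is bijective on $1+p\Z_p$, while the $p$-adic logarithm identifies $1+p\Z_p$ with $p\Z_p$ and carries the $p^{e_i}$-th power map to multiplication by $p^{e_i}$, whose image on $p\Z_p$ is $p^{e_i+1}\Z_p$. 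For $p=2$ I would run the same argument using the logarithm isomorphism $1+4\Z_2\cong4\Z_2$ (odd exponents $k_i$ are harmless, since odd power maps are bijective on $\Z_2^\times$), and it is precisely this $2$-adic computation that fixes the value of $f_2$. This is the step I expect to need the most care.

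\emph{The rescaling bijection.} Since $f\ge f_p$, the integer $1+p^fx$ reduces modulo $p^a$ to an element of $1+p^{f_p}\Z/p^a\Z$, so by the lemma there are units $u_1,\dots,u_t\in(\Z/p^a\Z)^\times$ with $u_i^{k_i}\equiv1+p^fx\bmod p^a$ for every $i$. The map $\phi\colon(\Z/p^a\Z)^t\to(\Z/p^a\Z)^t$ sending $(x_1,\dots,x_t)$ to $(u_1x_1,\dots,u_tx_t)$ is then a bijection preserving, for each $j$, the condition $x_j\in(\Z/p^a\Z)^\times$, and
\[ Q(u_1x_1,\dots,u_tx_t)=\sum_{i\in I}\lambda_iu_i^{k_i}x_i^{k_i}\equiv(1+p^fx)\sum_{i\in I}\lambda_ix_i^{k_i}=(1+p^fx)\,Q(x_1,\dots,x_t)\pmod{p^a}. \]
Hence $\phi$ restricts to a bijection of $\Gamma_J(Q;c,p^a)$ onto $\Gamma_J(Q;c(1+p^fx),p^a)$, which is the first assertion of the theorem.

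\emph{The geometric decay.} Fix $a\ge c_p+f_p$. I count in two ways the set of $\mathbf x\in(\Z/p^{a+1}\Z)^t$ with $x_j$ a unit for $j\in J$ and $Q(\mathbf x)\equiv c\bmod p^a$: on one hand each of the $N_J(Q;c,p^a)$ solutions modulo $p^a$ has exactly $p^t$ lifts to this set, and on the other hand grouping its elements by the value of $Q(\mathbf x)\bmod p^{a+1}$ shows it has $\sum_{c'\equiv c\,(p^a)}N_J(Q;c',p^{a+1})$ elements, whence
\[ \sum_{\substack{c'\in\Z/p^{a+1}\Z\\ c'\equiv c\,(p^a)}} N_J(Q;c',p^{a+1})=p^t\,N_J(Q;c,p^a). \]
Since $v_p(cp^{f_p})=c_p+f_p\le a$, each of the $p$ residues $c'=c+ip^a$ ($0\le i<p$) can be written as $c(1+p^{f_p}y_i)\bmod p^{a+1}$ for a suitable $y_i$, so by the first assertion (applied at level $p^{a+1}$) all $p$ summands equal $N_J(Q;c,p^{a+1})$; hence $p\,N_J(Q;c,p^{a+1})=p^tN_J(Q;c,p^a)$, i.e.\ $N_J(Q;c,p^{a+1})=p^{t-1}N_J(Q;c,p^a)$ whenever $a\ge c_p+f_p$. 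Iterating this from $a=c_p+f_p$ gives the stated closed form, and together with the finitely many values $N_J(Q;c,p^a)$ for $a\le c_p+f_p$ it determines the whole sequence. The only genuine obstacle is the $2$-adic input to the key lemma; everything after that is bookkeeping.
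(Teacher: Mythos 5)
Your proposal is correct and follows essentially the same route as the paper: the paper likewise realises the invariance by writing $1+p^{f}x$ as a $k_i$-th power $\beta_i^{k_i}$ of a unit (via the filtration subgroups $U_{p,a}^{(i)}$ of Lemma~\ref{lemma:easy}(3), which encodes the same $p$-adic logarithm computation as yours, including the shift by $2$ at $p=2$) and rescaling $(x_i)\mapsto(\beta_i x_i)$. For the decay it uses exactly your two-way count, phrased as: for $a\geq c_p+f_p$ the preimage of $\Gamma_J(c,p^a)$ under reduction from level $p^{a+1}$ is the disjoint union of the sets $\Gamma_J(c+up^a,p^{a+1})$, $0\le u<p$, all of equal cardinality by the invariance, whence $p\,N_J(c,p^{a+1})=p^{t}N_J(c,p^{a})$.
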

\begin{remark} For $J=\emptyset$, even if $p\nmid \prod\limits_{i=1}^t k_i$, the formula for $N(Q; 0,p^a)$ is much more complicated. In general we don't always have  $N(0, p^a)= p^{t-1} N(0,  p^{a-1})$  for $a$ sufficiently large. For example, consider $Q(x_1, x_2)=x_1^3+p x_2^3$. Then $N(0,p^{3a})=p^{4a}$, $N(0,p^{3a+1})=p^{4a+1}$ and $N(0,p^{3a+2})=p^{4a+2}$.
\end{remark}

As a consequence of Theorems A, B and C, we will give an algorithm to effectively compute $N_J(Q;c,p^a)$ for all possible $J$, $c$ and $a$ if the prime number $p\nmid \prod\limits_{i=1}^t k_i$. Moreover, except the case $J=\emptyset$ and  $c= 0$, the number of steps to compute $N_J(Q;c,p^a)$ is bounded by a constant independent of $a$.

Using the main theorems and the algorithm, we shall work on the example $Q(x_1,\cdots, x_t)=\lambda_1 x_1^k+\cdots +\lambda_t x^k_t$. We obtain the following results:
 \begin{enumerate}
  \item In the linear case ($k=1$), we solve the counting problem in full generality (cf. \cite{sun-yang}). Namely, for any prime $p$, we completely determine the value of $N_J(Q;c, p^a)$ for arbitrary $J\subseteq I$, $c\in \Z$ and $a\geq 1$. Our result is stated in Theorem~\ref{theo:linear}.

  \item In the quadratic case ($k=2$), for any prime $p$, we completely determine the value of  $N_J(Q;c, p^a)$ for  any $J\subseteq I$ satisfying $\min\{v_p(\lambda_i)\mid i\in I\}=\min\{v_p(\lambda_i)\mid i\in J\}$, and arbitrary $c\in \Z$ and integer $a\geq 1$. In particular, we get the exact formula for $N^*(Q;c,p^a)$ for any $c\in \Z$ and $a\geq 1$. Our result is stated in Theorem~\ref{theo:quad}. This is a vast generalization of Yang-Tang~\cite{yang-tang}, Sun-Cheng~\cite{sun-cheng} and  Mollahajiaghaei~\cite{mol}.

  \item In the general case, for prime $p\nmid k$, we give a more detailed version of our algorithm in Theorem~\ref{theorem:kkk1}. We obtain formulas so that $N_J(Q;c,p^a)$ can be computed in finite steps independent of $a$ except the case $c=0$ and $J=\emptyset$.

  \item We study the case $p\nmid k$ and the dimension $t=2$ in full generality. When $k=2$, $N_J(c, 2^a)$ is also studied in full generality.
 \end{enumerate}
Finally we shall work on the example $Q(x_1, \cdots, x_t)= 9x_1+ 2 x_2^3+ x_3^9$ for $p=3$, which is not covered by our algorithm, but the main theorems are still applicable.

\section{Preliminaries} \label{sec_formulas}

 \subsection{Reduce $Q$ to the reduced case.}
The following fact is obvious:

\begin{proposition} \label{prop:easy123} Consider the number $N_J(Q;  c, p^a)$ for $p$ a prime number and $J\subseteq I$.

(1) (Lowering dimension) If there exists $j\in I$ such that $v_p(\lambda_j)\geq a$,  then
 \begin{equation} \label{eq:aca} N_J(Q;  c, p^a)=\begin{cases} p^{a} N_J(Q_{I-\{j\}}; c, p^a), & \text{if}\ j\notin J;\\ p^{a-1}(p-1)N_{J-\{j\}}(Q_{I-\{j\}}; c, p^a), & \text{if}\ j\in J.
 \end{cases}\end{equation}

(2) (Lowering level) Let $e=\min\{v_p(\lambda_i)\mid i\in I\}$ and $v_p(c)=c_p$.
Then
 \begin{equation} \label{eq:aca2} N_J(Q;  c, p^a)=\begin{cases} p^{te} N_J(Q/p^e;  c/p^e, p^{a-e}), &\ \text{if}\ e\leq \min\{a,c_p\};\\ p^{(at-|J|)}(p-1)^{|J|}, &\ \text{if}\ a\leq \min\{e,c_p\}; \\
 0, &\ \text{if}\ c_p<\min\{e,a\}. \end{cases}
 \end{equation}

(3) (Lowering degree) If one has $v_p(k_i)\geq a$, replace $k_{i}$ by $k_i/p^{v_p(k_i)-a+1}$.
Then the new $k_i$ has $p$-adic valuation $a$ and $N_J(Q;  c, p^a)$ is unchanged.
\end{proposition}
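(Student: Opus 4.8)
The plan is to prove the three parts independently, each time by directly unwinding $N_J(Q;c,p^a)$ as the number of tuples $(x_1,\dots,x_t)\in(\Z/p^a\Z)^t$ with $\sum_i\lambda_i x_i^{k_i}\equiv c\bmod p^a$ and $x_j\in(\Z/p^a\Z)^\times$ for $j\in J$. \emph{For part (1):} if $v_p(\lambda_j)\ge a$ then $\lambda_j x_j^{k_j}\equiv0\bmod p^a$ for every $x_j$, so the congruence is literally $Q_{I-\{j\}}\equiv c\bmod p^a$ and imposes no condition on $x_j$. I would then project the solution set onto the coordinates other than $j$: this map is onto $\Gamma_J(Q_{I-\{j\}};c,p^a)$ when $j\notin J$ (and onto $\Gamma_{J-\{j\}}(Q_{I-\{j\}};c,p^a)$ when $j\in J$), and every fibre is the set of admissible values for $x_j$, which has $p^a$ elements if $j\notin J$ and $\#(\Z/p^a\Z)^\times=p^{a-1}(p-1)$ elements if $j\in J$. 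Multiplying cardinalities establishes the formula of part (1).

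\emph{For part (2):} write $\lambda_i=p^e\mu_i$, so $Q=p^e\cdot(Q/p^e)$ with $Q/p^e=\sum_i\mu_i x_i^{k_i}$ reduced at $p$ and $p^e\mid Q(x)$ for every $x$. I would then split according to how $e,a,c_p$ compare. If $c_p<\min\{e,a\}$: each value $Q(x)\bmod p^a$ lies in $p^{\min\{e,a\}}\Z/p^a\Z$, whereas $c\bmod p^a$ has valuation exactly $c_p<\min\{e,a\}$, so no $x$ solves the congruence. If $a\le\min\{e,c_p\}$: then $p^a\mid\lambda_i$ for all $i$ and $p^a\mid c$, so the congruence holds identically and $N_J$ is just the number of admissible tuples, $(p^a)^{t-|J|}\bigl(p^{a-1}(p-1)\bigr)^{|J|}=p^{at-|J|}(p-1)^{|J|}$. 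In the remaining case $e\le\min\{a,c_p\}$ with $e<a$; here I divide the congruence by $p^e$ (legitimate since $p^e$ divides both $Q(x)$ and $c$) to get the equivalent $(Q/p^e)(x)\equiv c/p^e\bmod p^{a-e}$, which depends only on $x\bmod p^{a-e}$, and then use that the reduction $(\Z/p^a\Z)^t\to(\Z/p^{a-e}\Z)^t$ is $p^{te}$-to-one and, since $a-e\ge1$, detects the unit conditions ($p\nmid x_j$ is visible mod $p^{a-e}$); the solution set upstairs is the full preimage of the one downstairs, giving the factor $p^{te}$.

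\emph{For part (3):} the number $N_J(Q;c,p^a)$ depends on $k_i$ only through the fibre-size function $\rho_{k_i}\colon\Z/p^a\Z\to\Z_{\ge0}$, $\rho_{k_i}(y)=\#\{x\in\Z/p^a\Z:x^{k_i}=y\}$, together with its restriction to units (which agrees with $\rho_{k_i}$ on units, a power of a unit being a unit); so the claim reduces to showing $\rho_{k_i}=\rho_{k_i'}$ when $k_i$ is replaced by $k_i'=k_i/p^{v_p(k_i)-a+1}$. For a non-unit $x$ one has $v_p(x^{k_i})\ge k_i\ge p^{v_p(k_i)}\ge p^a\ge a$, so $x^{k_i}\equiv0$, and likewise $x^{k_i'}\equiv0$ because $v_p(k_i')=a-1$ forces $k_i'\ge p^{a-1}\ge a$; thus on non-units both functions are supported at $0$ with value $p^{a-1}$. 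On $(\Z/p^a\Z)^\times$ the $k$-th power map is an endomorphism, so its image is a subgroup and every nonempty fibre has the common size $\#\ker$; for $p$ odd this group is cyclic of order $p^{a-1}(p-1)$, so image and $\#\ker$ are controlled by $\gcd\!\bigl(k,p^{a-1}(p-1)\bigr)$, and since $k_i,k_i'$ have the same prime-to-$p$ part and both have $p$-adic valuation $\ge a-1$ (while the $p$-part of $p^{a-1}(p-1)$ is $p^{a-1}$), this gcd --- hence $\rho$ on units --- is unchanged. The case $p=2$ goes the same way using $(\Z/2^a\Z)^\times\cong\Z/2\times\Z/2^{a-2}$ for $a\ge3$, and the trivial small cases $a\le2$.

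Everything here is elementary; the one place that needs care is part (3), where $p=2$ must be treated separately (the unit group fails to be cyclic for $a\ge3$), the small levels $a=1,2$ should be checked directly, and one should confirm that the normalization $v_p(k_i)-a+1$ in the definition of $k_i'$ is the right one. In part (2) a minor subtlety is that the three case hypotheses overlap on the boundary (e.g. $a=e\le c_p$ satisfies both the first and the second), so they should be read with the branch giving the correct value --- the explicit $p^{at-|J|}(p-1)^{|J|}$ --- taking precedence.
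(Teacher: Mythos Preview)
Your proof is correct. For parts (1) and (2) you spell out in full what the paper simply declares obvious, and your treatment (including the care about the overlap $e=a\le c_p$ between the first two branches of (2)) is fine.

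For part (3) your route differs from the paper's. You pass through the fibre-size functions $\rho_k(y)=\#\{x:x^k=y\}$, observe that $N_J$ depends on $k_i$ only through $\rho_{k_i}$ (and its unit version), and then argue $\rho_{k_i}=\rho_{k_i'}$ by a gcd computation on the cyclic group $(\Z/p^a\Z)^\times$, with a separate check for $p=2$. The paper instead proves the stronger pointwise identity $x^{k_i}=x^{k_i'}$ for every $x\in\Z/p^a\Z$: for units this is the Euler-type fact that $x^{p^s}=x^{p^{a-1}}$ in $(\Z/p^a\Z)^\times$ whenever $s\ge a-1$ (valid for all primes $p$ at once, since the exponent of the unit group divides $p^{a-1}(p-1)$ and $p^{a-1}(p-1)\mid p^s-p^{a-1}$), and for non-units it is the observation $p^{a-1}\ge a$ so any non-unit raised to either exponent is $0$. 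The paper's argument is shorter and avoids your case split on $p$; your argument, while valid, proves only what is needed (equality of fibre counts) rather than the cleaner underlying fact (equality of the power maps themselves). Note also that the statement's phrase ``the new $k_i$ has $p$-adic valuation $a$'' is a typo in the paper --- as you correctly compute, $v_p(k_i')=a-1$.
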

\begin{proof} The only thing needs to prove is (3), which follows from Euler's Theorem that for $x\in (\Z/p^a\Z)^\times$, $x^{p^s}=x^{p^{a-1}}$ for all $s\geq a$, and for $x\in p\Z/p^a\Z$, $x^{p^s}=0$ for all $s\geq a-1$ since $p^{a-1}\geq a$ for any prime $p$ and integer $a\geq 1$.
\end{proof}
Based on Proposition~\ref{prop:easy123}, to compute $N_J(Q;  c, p^a)$, it suffices to consider the case that $\min\{v_p(\lambda_i)\}=0$, $\max\{v_p( \lambda_i),\ v_p(k_i)\mid i=1,\cdots, t\}< a$  and the depth $d_p\leq a$. In particular, we can always assume $p\nmid \lambda_i$ for some $i\in I$.

\subsection{Formulas for $N(Q; c,p)$.}
We  recall the classical formulas for $N(Q; c,p)$. First recall for complex characters $\chi_1,\ \cdots,\ \chi_t$  of the prime field $\F_p$, the \emph{Jacobi sum} $J(\chi_1,\cdots,\chi_t)$ is defined by the formula
  \[
   J(\chi_1,\cdots,\chi_t)=
   \sum_{u_1+\cdots+u_t=1}\chi_1(u_1)\cdots\chi_t(u_t)\]
and the \emph{Jacobi sum} $J_0(\chi_1,\cdots,\chi_t)$ is defined by the formula
  \[   J_0(\chi_1,\cdots,\chi_t)=
   \sum_{u_1+\cdots+u_t=0}\chi_1(u_1)\cdots\chi_t(u_t).
 \]
Then the following theorem is well known:

\begin{theorem} \label{theorem:power3} (1) Suppose $p$ is odd and $\lambda_1\cdots\lambda_t\neq 0\in \F_p$. Then $N(c,p)$, the number of solutions of
 \[ Q(x_1,\cdots, x_t)=\lambda_1 x^{k_1}+\cdots \lambda_t x^{k_t} =c \]
over the prime field $\F_p$, is given by
\begin{equation} \label{eq:jacobi0}
   N(0,p)=p^{t-1}+\sum_{\substack{\chi_i^{k_i}=1,\ \chi_i\neq 1\\ \chi_1\cdots\chi_t= 1}} \chi_1(\lambda_1^{-1})
   \cdots\chi_t(\lambda_t^{-1})J_0(\chi_1, \cdots,\chi_t),
\end{equation}
and
\begin{equation} \label{eq:jacobi}
   N(c,p)=p^{t-1}+\sum_{\substack{\chi_i^{k_i}=1\\ \chi_i\neq 1}} \chi_1 \cdots\chi_t(c)\chi_1(\lambda_1^{-1}) \cdots\chi_t(\lambda_t^{-1})J(\chi_1,\chi_2,\cdots,\chi_t)
\end{equation}
for $c\neq 0$.

(2) If $2\nmid \lambda_i$ for some $i\in I$, then $N(0,2)=N(1,2)=2^{t-1}$.
\end{theorem}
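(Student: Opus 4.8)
The plan is the classical reduction to one‑variable power‑residue counts encoded by multiplicative characters of $\F_p$ (as in the standard theory of Jacobi sums, e.g.\ Ireland--Rosen, Chapter~8). Throughout I would adopt the usual convention that the trivial character $\varepsilon$ satisfies $\varepsilon(0)=1$ while $\chi(0)=0$ for every nontrivial $\chi$; this must be fixed at the outset and used consistently. First I would partition the solutions of $Q=c$ over $\F_p$ according to the values $u_i=\lambda_i x_i^{k_i}$, giving
$$ N(c,p)=\sum_{u_1+\cdots+u_t=c}\ \prod_{i=1}^{t} N_i(u_i),\qquad N_i(u_i):=\#\{x_i\in\F_p:\lambda_i x_i^{k_i}=u_i\}. $$
Since $\lambda_i\neq 0$ in $\F_p$, we have $N_i(u_i)=\#\{x_i:x_i^{k_i}=\lambda_i^{-1}u_i\}$, and the standard power‑residue identity over $\F_p$ reads $N_i(u_i)=\sum_{\chi_i^{k_i}=\varepsilon}\chi_i(\lambda_i^{-1})\chi_i(u_i)$, valid for all $u_i\in\F_p$ including $u_i=0$ by the convention above (the number of characters with $\chi_i^{k_i}=\varepsilon$ is $\gcd(k_i,p-1)$, and they detect exactly the $k_i$‑th powers). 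Multiplying out and interchanging the order of summation yields
$$ N(c,p)=\sum_{\chi_1^{k_1}=\varepsilon}\cdots\sum_{\chi_t^{k_t}=\varepsilon}\chi_1(\lambda_1^{-1})\cdots\chi_t(\lambda_t^{-1})\,J_c(\chi_1,\dots,\chi_t),\qquad J_c(\chi_1,\dots,\chi_t):=\sum_{u_1+\cdots+u_t=c}\chi_1(u_1)\cdots\chi_t(u_t). $$

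The next step is to evaluate $J_c$. When $c=0$ this is $J_0(\chi_1,\dots,\chi_t)$ by definition; when $c\neq 0$, the substitution $u_i=c v_i$ (valid since $c\in\F_p^\times$, and harmless at $u_i=0$ because $c\neq 0$) gives $J_c(\chi_1,\dots,\chi_t)=(\chi_1\cdots\chi_t)(c)\,J(\chi_1,\dots,\chi_t)$. It then remains to feed in the classical evaluations of Jacobi sums: (a) $J(\varepsilon,\dots,\varepsilon)=J_0(\varepsilon,\dots,\varepsilon)=p^{t-1}$; (b) if some but not all $\chi_i$ are trivial, then $J(\chi_1,\dots,\chi_t)=J_0(\chi_1,\dots,\chi_t)=0$, which one sees by summing out the trivial characters and using $\sum_{u\in\F_p}\chi_j(u)=0$ for the surviving nontrivial ones; and (c) if all $\chi_i$ are nontrivial but $\chi_1\cdots\chi_t\neq\varepsilon$, then $J_0(\chi_1,\dots,\chi_t)=0$, which follows by scaling all summation variables by $s\in\F_p^\times$ and picking $s$ with $(\chi_1\cdots\chi_t)(s)\neq 1$. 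Substituting these in, the all‑trivial term contributes the leading $p^{t-1}$, the partially‑trivial terms vanish, and the remaining terms are precisely those with every $\chi_i$ nontrivial and $\chi_i^{k_i}=\varepsilon$; for $c=0$, fact~(c) further cuts this down to the tuples with $\chi_1\cdots\chi_t=\varepsilon$, while for $c\neq 0$ no such constraint appears. This is exactly \eqref{eq:jacobi0} and \eqref{eq:jacobi}.

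Part~(2) is immediate and separate: if $2\nmid\lambda_j$ for some $j\in I$, then over $\F_2$ every $x_j\in\{0,1\}$ satisfies $x_j^{k_j}=x_j$, so $\lambda_j x_j^{k_j}=x_j$; hence for any choice of the other coordinates there is a unique $x_j\in\F_2$ with $Q=c$, namely $x_j=c-\sum_{i\neq j}\lambda_i x_i^{k_i}$. Thus $N(c,2)=2^{t-1}$ for every $c$, in particular for $c=0$ and $c=1$.

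Since all of this is bookkeeping with characters, the only real obstacle is cosmetic: fixing the $\chi(0)$ convention once and for all, and verifying that the one‑variable power‑residue identity and the substitution $u_i=c v_i$ remain valid at the boundary value $u_i=0$; beyond that, everything is either the definition of $J_c$ or a citation of the standard Jacobi‑sum lemmas (a), (b), (c).
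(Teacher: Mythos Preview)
Your proposal is correct and follows the standard character-sum/Jacobi-sum argument of Ireland--Rosen, Chapter~8, which is exactly what the paper cites for Part~(1); your treatment of Part~(2) via $x^k=x$ in $\F_2$ also matches the paper's one-line remark. The paper supplies no details beyond that citation, so you have simply written out the referenced proof in full.
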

\begin{proof} Part (1) follows from Theorem 5 in \S~8.7 in \cite{K. Ireland}. Part (2) is clear, since $x^k=x$ in $\F_2$.
\end{proof}

\section{Proof of the main theorems and the algorithm}

\subsection{The decomposition formula and its special cases.}\label{subsec:31}

We now prove Theorem A.
\begin{proof}[Proof of Theorem A]
   Note that $\Z/p^a\Z$ has a disjoint decomposition (assuming $p^{a+1}\Z/p^a\Z$ is the empty set)
 \[ \Z/p^a\Z= \bigsqcup_{b=0}^{a} (p^b\Z/p^a\Z-p^{b+1}\Z/p^a\Z). \]
Suppose $\bx=(x_1,\cdots, x_t)\in \Gamma_{J_1}(Q; c,p^a)$, and if $J_1= \emptyset$ and $J_2= I$, suppose $\bx \neq {\bf0}$. Then for $j\in J_2-J_1$,  $x_j\in p^{b_j}\Z/p^a\Z-p^{b_j+1}\Z/p^a\Z$ for some $0\leq b_j\leq a$. Set $b_j=0$ for $j\notin J_2-J_1$. Let $\bb=\bb(\bx)=(b_j)_{j=1,\cdots, t}\in B(J_2,J_1; a)$ and $J_\bb\neq \emptyset$.

For $j\in J_2\cap J_\bb$, the element $\tilde{x}_j=x_j/p^{b_j}$ is a well defined element in $(\Z/p^{a-b_j})^{\times}$. Let $C_j=\{x\in (\Z/p^{a})^{\times}\mid x\equiv \tilde{x}_j\bmod{p^{a-b_j}}\}$. For $j\in J_\bb-J_2$, let $C_j=\{x_j\}$. Then
 \[ C_{\bx}=\prod_{j\in J_\bb} C_j\subseteq  \Gamma_{J_2\cap J_\bb}(Q_\bb; c, p^a). \]
On the other hand, if $Q_\bb\neq 0$, then $J_\bb$ as the set of $j$'s such that $x_j$ appears in $Q_\bb$ is not empty. For $(y_j)_{j\in J_\bb}\in  \Gamma_{J_2\cap J_\bb}(Q_\bb; c, p^a)$, let $\tilde{x}_j=y_j\bmod{p^{a-b_j}}$, then $x_j=p^{b_j} \tilde{x}_j$ is a well defined element in $p^{b_j}\Z/p^a\Z-p^{b_j+1}\Z/p^a\Z$. Let $x_j=0$ for $j\notin J_\bb$. Then $\bx=(x_j)\in  \Gamma_{J_1}(Q; c,p^a)$. In this way, one element $\bx$ corresponds exactly to $p^{\sum_{j: b_j<a} b_j}=p^{s(\bb)}$ elements in $\Gamma_{J_2\cap J_\bb}(Q_\bb; c, p^a)$.

If $J_1= \emptyset$ and $J_2= I$, then ${\bf0} \in \Gamma_{J_1}(Q; c,p^a)$ if only if $p^a\mid c$, which is corresponding to the case $\bb=(a,\cdots, a)$ and $Q_\bb=0$.

In conclusion, \eqref{eq:nj} is proved.
 \end{proof}

\noindent
 \textbf{Special cases of the decomposition formula.} We shall use the following special cases in this paper:

 (1) The case $J=J_1\subsetneq I=J_2$. Then
 \begin{equation} \label{eq:nj1} N_J(Q;  c, p^a)=\sum_{\bb\in B(J,I; a)} p^{-s(\bb)}N^*(Q_\bb; c, p^a).
 \end{equation}
 This means that if we can determine $N^*(Q_\bb; c, p^a)$ for all $\bb\in B(J,I;a)$, then we get $N_J(Q; c, p^a)$.

(2) The case $a=1$. Then
  \begin{equation} \label{eq:nj2} N_{J_1}(Q; c, p)=\sum_{T\subseteq J_2-J_1} N_{J_2-T}(Q_{I-T}; c, p).
 \end{equation}

By the Inclusion-Exclusion Principle, \eqref{eq:nj2} has the following inverse formula
 \begin{equation} \label{eq:nj3}
  N_{J_2}(Q;c,p)=\sum\limits_{T\subseteq  J_2-J_1}(-1)^{|T|}N_{J_1}(Q_{I-T};c,p).
\end{equation}
Take $J_1=\emptyset$ and $J_2=J$  in \eqref{eq:nj3}, then we have
 \begin{equation} \label{eq:nj6}
  N_{J}(Q;c,p)=\sum\limits_{T\subseteq  J}(-1)^{|T|}N (Q_{I-T};c,p).
\end{equation}
This means that $N_J(Q;c,p)$ is determined by $N(Q_{I-T};c,p)$ for all $T\subseteq J$.

\begin{remark} Another interesting question is  to count the number $N_{J_1, J_2}(Q;c,n)$ of solutions of $Q(x_1,\cdots, x_t)\equiv c\bmod{n}$ such that $x_i\in (\Z/n\Z)^\times$ for $i\in J_1$ and  $x_i\notin (\Z/n\Z)^\times$ for $i\in J_2$. First one must keep in mind that no product formula exists in general for $N_{J_1, J_2}(Q;c,n)$ if $J_2\neq \emptyset$. However, by the Inclusion-Exclusion Principle, we have
  \begin{equation} \label{eq:nj1j2}
 N_{J_1, J_2}(Q;  c, n)= \sum_{T\subseteq J_2} (-1)^{|T|} N_{J_1\cup T}(Q;  c, n).
 \end{equation}
 As a consequence,  the values $N_J(Q;c,n)$ for all $J$ determine $N_{J_1, J_2}(Q;c,n)$ for all disjoint pairs $(J_1, J_2)$.
 \end{remark}

\subsection{The lifting formula}

We need the following lemma whose proof is an easy exercise of Newton's Binomial Theorem and $p$-adic analysis:
\begin{lemma} \label{lemma:easy} (1) Let $p$ be an odd prime. For integers $x, y$, $k\geq 1$, and $m\geq 1$, we have
 \[ (x+p^m y)^k-x^k \equiv k x^{k-1}y p^m\bmod{p^{m+v_p(k)+1}}. \]

(2) For  integers $x$ and integer $y$, $k\geq 1$, and $m\geq 1$, then
 \[ (x+2^my)^k-x^k\equiv \begin{cases} 0 \bmod{2^{v_2(k)+2}},&\text{ if $k$ even and $m=1$},\\ k x^{k-1} y\cdot 2^m\bmod{2^{v_2(k)+m+1}},& \text{otherwise.} \end{cases} \]

For odd integer $x$,
  \[ v_2(x^k-1)\geq \begin{cases} 1, &\text{if $k$ odd},\\ 2+v_2(k), &\text{if $k$ even}. \end{cases} \]

(3) Let $U_{p,a}^{(i)}=\{1+p^ix\mid x\in \Z/p^a\Z\}\subseteq (\Z/p^a\Z)^\times$. Then for $f>0$, $(U_{p,a}^{(i)})^{p^f}=U_{p,a}^{(f+i)}$ if $(p,i)\neq (2,1)$ and $(U_{2,a}^{(1)})^{p^f}=U_{2,a}^{(f+2)}$.
\end{lemma}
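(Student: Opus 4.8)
The plan is to expand every expression by Newton's Binomial Theorem and to estimate the $p$-adic valuation of each term, the only delicate point being a cancellation in the even-degree dyadic case.

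\emph{Parts (1) and (2).} Write $(x+p^m y)^k=\sum_{j=0}^{k}\binom{k}{j}x^{k-j}y^{j}p^{mj}$; the $j=0$ term is $x^k$, the $j=1$ term is $kx^{k-1}yp^m$, so everything reduces to bounding $v_p\bigl(\binom{k}{j}x^{k-j}y^{j}p^{mj}\bigr)$ for $j\ge 2$. From $\binom{k}{j}=\frac{k}{j}\binom{k-1}{j-1}$ one gets $v_p\binom{k}{j}\ge v_p(k)-v_p(j)$, so the $j$-th term has valuation at least $v_p(k)-v_p(j)+mj$. For $p$ odd, the elementary inequality $j\ge v_p(j)+2$ for $j\ge 2$ (from Bernoulli: $p^{v_p(j)}\ge 1+v_p(j)(p-1)\ge 2+v_p(j)$ when $v_p(j)\ge 1$, and trivially otherwise) yields $v_p(k)-v_p(j)+mj\ge v_p(k)+m+1$, which is exactly (1). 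For $p=2$ in the ``otherwise'' branch of (2) the same scheme works: if $m\ge 2$ use $v_2(j)\le j-1$ to get $m(j-1)-v_2(j)\ge(m-1)(j-1)\ge 1$; if $m=1$ and $k$ is odd, all $j\ge 3$ are handled by $j-v_2(j)\ge 2$, while the single borderline term $j=2$ has valuation $v_2\binom{k}{2}+2=v_2(k-1)+1\ge 2$ since $k-1$ is even.

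The genuinely different case is $k$ even, $m=1$ in (2), where the $j=1$ and $j=2$ terms each have valuation only $v_2(k)+1$. Here I would add them rather than estimate separately: $kx^{k-1}y\cdot 2+\binom{k}{2}x^{k-2}y^{2}\cdot 4=2k\,x^{k-2}y\,(x+(k-1)y)$, and since $k-1$ is odd a short case check on the parities of $x$ and $y$ shows $x^{k-2}y(x+(k-1)y)$ is always even, so the sum is divisible by $2^{v_2(k)+2}$; the terms $j\ge 3$ are again controlled by $j-v_2(j)\ge 2$. For the last assertion of (2), writing $k=2^{s}k'$ with $k'$ odd and $z=x^{k'}$ (odd), I would factor $z^{2^{s}}-1=(z-1)(z+1)\prod_{i=1}^{s-1}(z^{2^{i}}+1)$, note $v_2((z-1)(z+1))\ge 3$ and $v_2(z^{2^{i}}+1)=1$ for $i\ge 1$ (an odd square is $\equiv 1\bmod 8$), giving $v_2(x^k-1)\ge s+2$; the odd-$k$ case is immediate. (This is just the $p=2$ lifting-the-exponent lemma.)

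\emph{Part (3).} I would first note that, for $i\le a$, $U_{p,a}^{(i)}$ is the kernel of reduction $(\Z/p^a\Z)^\times\to(\Z/p^i\Z)^\times$, hence a subgroup of order $p^{a-i}$ (and $\{1\}$ when $i\ge a$). The $p^f$-power map is an endomorphism of $U_{p,a}^{(i)}$, and applying (1) — resp. (2) with $k=p^f$ — to $(1+p^i x)^{p^f}-1$ shows its image lies in $U_{p,a}^{(i+f)}$ when $(p,i)\ne(2,1)$, and in $U_{2,a}^{(f+2)}$ when $(p,i)=(2,1)$ (this last containment is exactly the even-$k$, $m=1$ branch of (2)). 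It then suffices to match cardinalities: the kernel of the $p^f$-power map on $U_{p,a}^{(i)}$ is its subgroup of elements of order dividing $p^f$, which has $p^{\min(f,a-i)}$ elements when $(p,i)\ne(2,1)$ — because $U_{p,a}^{(i)}$ is then cyclic of order $p^{a-i}$, generated by $1+p^i$, as (1)/(2) shows — and $2\cdot 2^{\min(f,a-2)}$ elements when $(p,i)=(2,1)$, since $(\Z/2^a\Z)^\times\cong\Z/2\Z\times\Z/2^{a-2}\Z$ for $a\ge 3$. In each case the image has exactly $|U_{p,a}^{(i+f)}|$ elements, and being contained in $U_{p,a}^{(i+f)}$ it must equal it; the $(2,1)$ exception is the footprint of the $\Z/2\Z$ direct factor.

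I expect the main obstacle to be the bookkeeping in the dyadic even-degree case of (2): identifying precisely which terms are borderline, carrying out the parity cancellation for the $j=1,2$ pair, and not overlooking small-exponent edge cases (e.g. $k=2$, or $a\le 2$ in part (3)); everything else is a routine valuation estimate.
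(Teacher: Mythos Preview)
Your approach via Newton's binomial theorem and term-by-term valuation estimates is exactly the intended one --- the paper itself gives no detailed proof, only the remark that this is ``an easy exercise of Newton's Binomial Theorem and $p$-adic analysis.'' Parts (1), the ``otherwise'' branch of (2), the second assertion of (2), and (3) are all handled correctly.

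There is, however, a genuine gap in your treatment of the first branch of (2) (the case $k$ even, $m=1$). Your ``short case check on the parities'' claims that $x^{k-2}y\bigl(x+(k-1)y\bigr)$ is always even, but this fails for $k=2$ when $x$ is even and $y$ is odd: the expression is then $y(x+y)$, which is odd. Concretely, taking $k=2$, $x=0$, $y=1$ gives $(0+2)^2-0^2=4$, whereas $2^{v_2(2)+2}=8$, and $4\not\equiv 0\pmod 8$. In other words, the lemma as printed is itself incorrect at $k=2$ without the additional hypothesis that $x$ be odd. For $k\ge 4$ even your argument is fine (since then $k-2\ge 2$ and $x^{k-2}$ is even whenever $x$ is), and if one assumes $x$ odd your parity check works for all even $k\ge 2$. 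This is harmless for the applications: in the proofs of Theorems~B and~C the lemma is only invoked with the relevant variable in $(\Z/2^a\Z)^\times$, hence odd, and your own proof of (3) applies this branch with base $1$, also odd. You flagged $k=2$ as a potential edge case in your closing remark; the resolution is simply to add ``$x$ odd'' to the hypothesis of that branch.
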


We are now ready to prove Theorem B and Theorem C.

\begin{proof}[Proof of Theorem B] Write $d=d_p$.
Let $\psi_{a,b}$ be the natural reduction map from $\Gamma_J(c, p^a)$ to $\Gamma_J(c, p^{b})$.

(1) First assume $p$ is odd. Suppose that $j$ satisfies $v_p(\lambda_j k_j)=e_j+f_j=d_j<a$. By Lemma~\ref{lemma:easy}(1), if $(x_1,\cdots, x_j,\cdots, x_t)\in \Gamma_J(c, p^a)$, then $(x_1,\cdots, x_j+p^{a-d_j}y_j,\cdots, x_t)\in \Gamma_J(c, p^a)$ for any $y_i\in \Z/p^a\Z$.

If $a>d_{p,J}$, then $a>d_j+1$ for some $j\in J$. Let $(a_1,\cdots, a_t)\in \Gamma_J(c, p^{a-1})$. Let $u\in \{0,\cdots, p-1\}$. Let $x_i\in \Z/p^a\Z$ be any lifting of $a_i$. Then
 \[ Q(x_1,\cdots, x_j+up^{a-d_j-1},\cdots, x_t)\equiv Q(x_1,\cdots, x_t)+ \frac{\lambda_j k_j}{p^{d_j}} x_i^{k_i-1} u p^{a-1}\bmod{p^a}. \]
Thus there exists exactly one $u\in \{0,\cdots, p-1\}$ such that $(x_1,\cdots, x_j+up^{a-d_j-1},\cdots, x_t)\in \Gamma_J(c, p^a)$, and $\psi_{a,a-1}$ is a $p^{t-1}$-to-$1$ map. Thus we have the lifting formula
 \begin{equation} N_J(c, p^a)= p^{t-1} N_J(c, p^{a-1}) \end{equation}
for all $a>d_{p,J}$.

Now assume $p=2$.  Assume $a>d_{2,J}$.  Then the assumption means that $a>d_j+2$ for some $j\in J$ with $k_j$ even or $a>d_j+1$ for some $j\in J$ with $k_j$ odd. Let $(a_1,\cdots, a_t)\in \Gamma_J(c, 2^{a-1})$.  Let $x_i\in \Z/2^a\Z$ be any lift of $a_i$. Then
 \[ Q(x_1,\cdots, x_j+2^{a-d_j-1},\cdots, x_t)\equiv Q(x_1,\cdots, x_j)+  2^{a-1}\bmod{2^a}. \]
Thus one of $(x_1,\cdots, x_t)$ and $(x_1,\cdots, x_j+2^{a-d_j-1},\cdots, x_t)$ is a solution of $Q(x_1,\cdots, x_t)\equiv c\bmod{n}$, and $\psi_{a,a-1}$ is a $2^{t-1}$-to-$1$ map. Again we have the lifting formula.

(2) Assume  $a\leq d=d_{p,I}$. Suppose $(a_1,\cdots, a_t)\in \F_p^t$, let $\alpha_i\in \Z$ be any lifting of $a_i$. Then
 \[ \lambda_i\alpha_i^{k_i}\equiv (\lambda_i\alpha_i+py_i)^{k_i} \bmod{p^{a}} \]
for any $y_i\in \Z$, and  $Q(\alpha_1,\cdots, \alpha_t)\bmod p^a$ is a fixed element in $\Z/p^a\Z$ independent of the lifting, so the map $\varphi_a$ is well-defined. Thus for $(a_1,\cdots, a_t)\in \Gamma_J(c,p)\subseteq \F_p^t$,
 \[ \# \psi^{-1}_{a,1}(a_1,\cdots, a_t)=\begin{cases} p^{(a-1)t}, & \text{if}\ \varphi_a(a_1,\cdots a_t)= c\bmod{p^a};\\ 0, & \text{if otherwise.} \end{cases} \]

Assume furthermore that $p=2$. For $T\subseteq I$, let $e_T=(e_{T,i})_{i\in I}$ be the element in $\F^t_2$ that $e_{T,i}=1$ for $i\in T$ and $e_{T,i}=0$ for $i\notin T$. Then $\Gamma_J(c,2)$ consists of elements $e_T$ satisfying $T\supseteq J$ and $v_2(\sum_{i\in T}\lambda_T-c)\geq 1$. Let $0$ and $1$ in $\Z$ be the liftings of $0$ and $1$ in $\F_2$ respectively. Then $\varphi_a(e_T)=\sum\limits_{i\in T} \lambda_i\bmod{2^a}$. This finishes the proof of Theorem B(2).
\end{proof}

\begin{corollary} Given the polynomial $Q(x_1,\cdots, x_t)$. If at prime $p$ one has $d_p\geq t$. Then there exists $c\in \Z$ such that $N^*(Q; c,p^{d_p})=0$.
\end{corollary}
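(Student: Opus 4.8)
The plan is to use Theorem B(2) to reduce the statement to a counting problem over $\F_p$. Since $d_p \geq t \geq 1$, and we may take $a = d_p$, we are in the regime $a \leq d_p$ where Theorem B(2) applies with $J = I$. By equation~\eqref{eq:power112j} we have
\[ N^*(Q; c, p^{d_p}) = p^{(d_p-1)t}\,\#\varphi_{d_p, I}^{-1}(c \bmod p^{d_p}), \]
so it suffices to produce some $c \in \Z$ for which $c \bmod p^{d_p}$ is not in the image of $\varphi_{d_p, I}$ restricted to $\prod_{i \in I} \F_p^\times$. Equivalently, I must show that the map $\varphi_{d_p, I}$ cannot be surjective onto $\Z/p^{d_p}\Z$.

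The key observation is a cardinality count. The domain $\prod_{i\in I}\F_p^\times$ of $\varphi_{d_p,I}$ has size $(p-1)^t$, while the target $\Z/p^{d_p}\Z$ has size $p^{d_p}$. If $d_p \geq t$, then $p^{d_p} \geq p^t > (p-1)^t$, so the domain is strictly smaller than the target and the map cannot be surjective; hence some residue class $c \bmod p^{d_p}$ is missed, and any integer $c$ in that class satisfies $N^*(Q; c, p^{d_p}) = 0$. One should double-check the edge case $p = 2$: then $(p-1)^t = 1$ while $p^{d_p} = 2^{d_2} \geq 2^t \geq 2$, so the map from a one-element set certainly is not onto; indeed the explicit description in Theorem B(2) shows $\varphi_{d_2, I}$ takes only the single value $\sum_{i\in I}\lambda_i \bmod 2^{d_2}$, and any $c$ incongruent to this works.

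I do not anticipate a genuine obstacle here — the argument is essentially a pigeonhole count once Theorem B(2) is in hand. The only point requiring a moment's care is making sure the hypothesis $d_p \geq t$ is used correctly: we need $a = d_p \leq d_p$ to invoke Theorem B(2) (trivially true), and we need $p^{d_p} > (p-1)^t$, which follows from $d_p \geq t$ since $p^{d_p} \geq p^t$ and $p^t > (p-1)^t$ for every prime $p$. No reduction of $Q$ to the reduced case is needed, since Theorem B(2) as stated does not require it (only Theorem B(1) does); alternatively one notes that replacing $Q$ by its reduced form does not change $d_p$ or the vanishing of $N^*$. Thus the proof is:
\begin{proof}
Take $a = d_p$. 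Since $a \leq d_p$, Theorem B(2) applies with $J = I$, giving
\[ N^*(Q; c, p^{d_p}) = p^{(d_p-1)t}\,\#\varphi_{d_p, I}^{-1}(c \bmod p^{d_p}) \]
for every $c \in \Z$, where $\varphi_{d_p, I}\colon \prod_{i\in I}\F_p^\times \to \Z/p^{d_p}\Z$. The domain of $\varphi_{d_p, I}$ has cardinality $(p-1)^t$, while the target has cardinality $p^{d_p} \geq p^t > (p-1)^t$ because $d_p \geq t$. Hence $\varphi_{d_p, I}$ is not surjective, so there is a residue class $c_0 \bmod p^{d_p}$ not in its image. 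Any integer $c \equiv c_0 \bmod p^{d_p}$ then has $\#\varphi_{d_p, I}^{-1}(c \bmod p^{d_p}) = 0$, whence $N^*(Q; c, p^{d_p}) = 0$.
\end{proof}
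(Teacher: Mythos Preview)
Your proof is correct and follows exactly the paper's approach: the paper's one-line argument is ``there are $p^{d_p}$ conjugacy classes modulo $p^{d_p}$ but there are only $(p-1)^t$ points in $\F_p^{\times\,t}$,'' which is the same pigeonhole count you carry out via Theorem~B(2). One small inaccuracy in your side remarks: Theorem~B is stated with the reduced hypothesis for \emph{both} parts, and your alternative claim that passing to the reduced form does not change $d_p$ is false (it drops by $e=\min_i v_p(\lambda_i)$); but you are right that the \emph{proof} of part~(2) never uses reducedness, so your main argument stands regardless.
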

\begin{proof} This is because there are $p^{d_p}$ conjugacy classes modulo $p^{d_p}$ but there are only $(p-1)^t$ points in $\F_p^{\times\, t}$.
\end{proof}

\begin{proof}[Proof of Theorem C] Write $k_i=p^{f_i} k'$ such that $(p,k')=1$. By Lemma~\ref{lemma:easy}, if $f\geq f_p$, then for any $i\in I$,  $1+p^f x=(1+py_i)^{p^{f_i}}$ for some $f_i\in \Z/p^a\Z$. If $a\leq c_p+f$, the formula is certainly true. For $a>c_p+f$,  let $u_i, v_i\in \Z$ such that $u_i k'_i+p^{a-k_i} v_i=1$, then $1+p^f x=((1+py_i)^{u_i k_i}=\beta_i^{k_i}$ for some $\beta_i\in (\Z/p^a\Z)^\times$. Thus we have a one-to-one correspondence
 \[ \Gamma_J(c, p^a)\rightarrow \Gamma_J(c(1+p^f x), p^a),\qquad (x_i)\mapsto (x_i\beta_i) \]
and hence $N_J(c, p^a)=N_J(c(1+p^f x), p^a)$.

Now consider the natural map $\psi_{a+1,a}: (\Z/p^{a+1}\Z)^t\rightarrow (\Z/p^{a}\Z)^t$. For $a>c_p+f_p$, $\psi^{-1}_{a+1,a}(\Gamma_J(c, p^a))$ is the disjoint union of $\Gamma_J(c+up^a, p^{a+1})$ for $u\in \{0,\cdots, p-1\}$, but all
$\Gamma_J(c+up^a, p^{a+1})$ are of the same cardinality $N_J(c, p^{a+1})$, hence the lifting formula $N_J(c, p^{a+1})=p^{t-1} N_J(c, p^a)$ holds. This finishes the proof of Theorem C.
\end{proof}

\subsection{An algorithm to compute $N_J(Q; c, p^a)$ if $p\nmid\prod\limits_{i\in I} k_i$}
By Theorems A, B and C, we then have the following algorithm to effectively compute $N_J(Q; c, p^a)$.

\begin{enumerate}
 \item Reduce $Q$ to the reduced form at $p$ (i.e., $d_p(Q)=1$) by Proposition~\ref{prop:easy123}. We suppose $Q$ is reduced hereafter.

\item Compute $N(Q; c, p)$ for all $Q$ by using formulas in Theorem~\ref{theorem:power3}.
\item For $J$ nonempty, compute $N_J(Q;c, p)$ by the Inclusion-Exclusion Principle formula \eqref{eq:nj6}. If $d_{p,J}=1$, use the relation $N_J(Q;c, p^a)=p^{(a-1)(t-1)} N_J(Q; c, p)$ by Theorem B to get $N_J(Q; c, p^a)$, in particular, get  $N^*(Q; c, p^a)$.

\item  For $J$ nonempty and $d_{p,J}=b+1>1$, use  the decomposition formula \eqref{eq:nj1} to compute $N_J(Q;c, p^{a})$ for all $1<a\leq b+1$, then $N_J(Q;c, p^a)=p^{(a-b-1)(t-1)} N_J(Q; c, p^{b+1})$ for $a\geq b+1$ by Theorem B. (Note: the assumption $p\nmid \prod k_i$ means the reduced form of $Q_\bb$ for any $\bb$ in the right hand side of \eqref{eq:nj1} is of depth $1$, hence $N^*(Q_\bb; c, p^a)$  can be computed as in the previous step.)

\item If $c\neq 0$, let $c_p=v_p(c)$. Compute $N(Q; c, p^a)=N(Q; 0, p^a)$ for $a\leq c_p$ and $N(Q; c, p^{c_p+1})$ by the decomposition formula~\eqref{eq:nj1}. Then for $a>c_p+1$, $N(Q;c, p^a) = p^{(a-c_p-1)(t-1)} N(Q; c, p^{c_p+1})$ from Theorem C.

\item Use the decomposition formula~\eqref{eq:nj1} to compute $N(Q; 0, p^a)$ for any given $a$.
\end{enumerate}
\begin{remark} We see that except the last step to compute the case $J=\emptyset$  and $c=0$, the number of steps to compute $N_J(Q;c, p^a)$ is bounded by a constant independent of $a$.

In the case $J$ is nonempty, let $|J|=s$. If $c_p=v_p(c)<b$, by Theorem C, one can furthermore get
 \[ N_J(Q; c, p^{b+1})=p^{b-c_p+c_p s}(p-1)^{s} N(Q_{I-J}; c, p^{c_p+1}). \]
 In particular, if $p\nmid c$, i.e., $c_p=0$, then we just need formulas for $N(Q_{I-J};c, p)$ in Theorem~\ref{theorem:power3} to get $N_J(Q; c, p^a)$.
\end{remark}

\section{Applications of the main theorems}
In this section, we shall apply the general formulas obtained in the previous section to compute $N_J(Q; c, p^a)$ in many special cases. Without loss of generality, we  assume $Q$ is reduced, i.e., $p\nmid \lambda_i$ for some $i$ because of \eqref{eq:aca2}.

\subsection{The linear case $Q(x_1,\cdots, x_t)=\sum\limits_{i=1}^t \lambda_i x_i$.} Consider the linear congruence equation
\begin{align*}
  \lambda_1x_1 + \cdots + \lambda_tx_t \equiv c \ \bmod{p^a}.
\end{align*}
\begin{theorem} \label{theo:linear} Suppose $p\nmid \lambda_i$ for some $i\in I$. For any subset $J$ of $I$ and prime $p$, let $s=\# J$ and $s_p=\# J_p$ where $J_p= \{j\in J\mid p\nmid \lambda_j\}$. Then

(1) The lifting formula holds for all $a\geq 1$:
 \begin{equation}  \label{eq:linear0} N_J(Q; c, p^a)=p^{(a-1)(t-1)} N_J(Q; c, p). \end{equation}

(2) If there exists $i\notin J$, $p\nmid \lambda_i$, then
  \begin{equation} \label{eq:linear1} N_J(Q;c,p)=(p-1)^{s}\ p^{(t-s-1)}; \end{equation}
if for all $i\notin J$, $p\mid \lambda_i$, then
 \begin{equation} \label{eq:linear2} N_J(Q;c,p)=(p-1)^{s} \ p^{(t-s-1)}+(-1)^{s_p}(p-1)^{s-s_p}\ p^{(t-s-1)}(p\delta_c-1)
 \end{equation}
where $\delta_c=1$ if $p\mid c$ and $=0$ if $p\nmid c$.
\end{theorem}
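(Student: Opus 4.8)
The plan is to prove part~(1) by a direct counting argument on the level-reduction maps, and part~(2) by reducing the count modulo~$p$ to a classical linear-equation count over $\mathbb F_p$ together with an additive-character evaluation. Throughout, write $\bar\lambda_i=\lambda_i\bmod p$ and $\bar c=c\bmod p$; reducedness of $Q$ means $\bar\lambda_{i_0}\neq 0$ for some $i_0\in I$.

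\emph{Part (1).} I would study the natural reduction map $\psi_{a,a-1}\colon \Gamma_J(Q;c,p^a)\to\Gamma_J(Q;c,p^{a-1})$ for $a\geq 2$; it is well defined on $\Gamma_J$ since a coordinate that is a unit mod $p^{a-1}$ has every lift a unit mod $p^a$. Given $(a_1,\dots,a_t)\in\Gamma_J(Q;c,p^{a-1})$, write an arbitrary lift as $x_i=a_i+p^{a-1}y_i$ with $y_i\in\{0,\dots,p-1\}$. Because $Q$ is linear, $Q(x_1,\dots,x_t)\equiv Q(a_1,\dots,a_t)+p^{a-1}\sum_i\lambda_i y_i\pmod{p^a}$; writing $Q(a_1,\dots,a_t)=c+p^{a-1}m$, the condition for $(x_i)$ to lie in $\Gamma_J(Q;c,p^a)$ becomes the single affine-linear equation $\sum_i\bar\lambda_i y_i=-\bar m$ over $\mathbb F_p$. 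As some $\bar\lambda_i\neq 0$, this equation has exactly $p^{t-1}$ solutions in $\mathbb F_p^t$, so $\psi_{a,a-1}$ is surjective and $p^{t-1}$-to-one, giving $N_J(Q;c,p^a)=p^{t-1}N_J(Q;c,p^{a-1})$, and \eqref{eq:linear0} follows by induction on $a$. This is a uniform replacement for Theorem~B, which would yield the lifting formula directly only when $d_{p,J}=1$.

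\emph{Part (2).} By definition $N_J(Q;c,p)$ counts $(x_1,\dots,x_t)\in\mathbb F_p^t$ with $x_j\in\mathbb F_p^\times$ for $j\in J$ and $\sum_i\bar\lambda_i x_i=\bar c$. If there is $i_0\notin J$ with $\bar\lambda_{i_0}\neq 0$, I would let all coordinates except $x_{i_0}$ range freely over their allowed sets — $\mathbb F_p^\times$ for the $s$ indices in $J$ and $\mathbb F_p$ for the remaining $t-s-1$ indices of $I-J$ — and note that $x_{i_0}$ is then uniquely determined with no extra constraint, which gives \eqref{eq:linear1}. If instead $\bar\lambda_i=0$ for all $i\notin J$, the $t-s$ coordinates indexed by $I-J$ are unconstrained ($p^{t-s}$ choices) and the equation reduces to $\sum_{j\in J_p}\bar\lambda_j x_j=\bar c$ in the variables $x_j$, $j\in J$; the $s-s_p$ variables with $j\in J-J_p$ are then free over $\mathbb F_p^\times$, and after the substitution $z_j=\bar\lambda_j x_j$ for $j\in J_p$ what remains is to count $(z_j)_{j\in J_p}\in(\mathbb F_p^\times)^{s_p}$ with $\sum_{j\in J_p}z_j=\bar c$.

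This last number I would compute with the $p$ additive characters $\psi$ of $\mathbb F_p$: it equals $\frac1p\sum_\psi\psi(-\bar c)\bigl(\sum_{z\neq 0}\psi(z)\bigr)^{s_p}=\frac1p\bigl[(p-1)^{s_p}+(-1)^{s_p}(p\delta_c-1)\bigr]$, using $\sum_{z\neq 0}\psi(z)=-1$ for $\psi\neq 1$ and $\sum_\psi\psi(-\bar c)=p\delta_c$. Multiplying by the bookkeeping factor $p^{t-s}(p-1)^{s-s_p}$ and simplifying $p^{t-s-1}(p-1)^{s-s_p}\bigl[(p-1)^{s_p}+(-1)^{s_p}(p\delta_c-1)\bigr]$ yields \eqref{eq:linear2}. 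The only step that is not pure bookkeeping is this character-sum evaluation, and it is entirely classical; the points needing care are the two disjoint splittings ($I=J\sqcup(I-J)$ and $J=J_p\sqcup(J-J_p)$) and the degenerate cases — for instance $s_p=0$, or $J=I$, in which only the second case of part~(2) can occur and reducedness forces $s_p\geq 1$ there.
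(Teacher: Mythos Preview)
Your proof is correct and follows the same overall architecture as the paper's: the same case split in Part~(2), and the same reduction of $N_J(Q;c,p)$ in the second case to the count of $(z_j)\in(\F_p^\times)^{s_p}$ with $\sum_j z_j=\bar c$ after stripping off the free factors $p^{t-s}(p-1)^{s-s_p}$. Two sub-steps use different tools. For Part~(1), the paper treats the two cases of Part~(2) separately---in the first case the direct count $N_J(Q;c,p^a)=\varphi(p^a)^s\,p^{a(t-s-1)}$ already yields the lifting formula, while in the second case $d_{p,J}=1$ and Theorem~B is invoked; your direct fiber-counting argument via the linear equation $\sum_i\bar\lambda_i y_i=-\bar m$ handles both cases at once without appealing to Theorem~B, which is a bit cleaner and more self-contained. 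For the evaluation of $N^*(Q_{J_p};c,p)$, the paper uses the Inclusion--Exclusion formula~\eqref{eq:nj6}, summing $(-1)^{|T|}N(Q_{J_p-T};c,p)$ over $T\subseteq J_p$, whereas you use additive characters; both are standard and give the same closed form $\tfrac1p\bigl[(p-1)^{s_p}+(-1)^{s_p}(p\delta_c-1)\bigr]$.
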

\begin{proof} If there exists $i\notin J$, $p\nmid \lambda_i$, then one can choose all possible $x_j$ for $j\neq i$, and then $x_i$ is decided by the $x_j$'s, so $N_J(Q;c,p^a)=p^{a(t-s-1)}\cdot \varphi(p^a)^s$. Thus \eqref{eq:linear1} holds, so does \eqref{eq:linear0} in this situation.

If for all $i\notin J$, $p\mid \lambda_i$, then there exists $i\in J$ such that $p\nmid \lambda_i$, so $d_{p,J}=1$ and  \eqref{eq:linear0} holds in this situation by Theorem B. Now  one easily has $N_J(Q;c,p)=p^{t-s}(p-1)^{s-s_p} N^*(Q_{J_p}; c, p)$, and by \eqref{eq:nj3},
 \[ \begin{split} N^*(Q_{J_p}; c, p)=&\sum_{i=0}^{s_p-1} (-1)^i \binom{s_p}{i}p^{s_p-i-1}+(-1)^{s_p}\delta_c\\ =&\dfrac{1}{p}(p-1)^{s_p}+(-1)^{s_p}(\delta_c-\frac{1}{p}). \end{split} \]
The theorem is proved.
\end{proof}

\subsection{The  case $Q(x_1,\cdots, x_t)=\sum\limits_{i=1}^t \lambda_i x_i^k$.}
In this subsection, we consider the congruence equation
\begin{align*}
  \lambda_1x_1^k+ \cdots + \lambda_tx_t^k\equiv c \ \bmod{p^a}.
\end{align*}
\subsubsection{A general result.}
The following Theorem is a more detailed version of our algorithm:
\begin{theorem} \label{theorem:kkk1} Suppose prime $p\nmid k$ and $Q$ is reduced at $p$. For $c\neq 0$, let $c_p$ be the $p$-adic valuation of $c$. Let $I_p=\{i\in I\mid p\nmid \lambda_i\}$ and $t_p=\# I_p$. For $J$ a nonempty subset of $I$, let $J_p=\{i\in J\mid p\nmid \lambda_i\}$, $s=\# J$ and $s_p=\# J_p$.
Then

(1) For $c\neq 0$, $N(Q; c, p^a)$ for all $a\geq 1$ is completely determined by $N(Q; 0, p^a)$  for $1\leq a\leq c_p$ and $N(Q; c, p^{c_p+1})$ through the formula
 \begin{equation} \label{eq:kkk1} N(Q;c,p^a)=p^{(a-c_p-1)(t-1)} N(Q; c,p^{c_p+1}),\quad \text{if}\ a\geq c_p+1.
   \end{equation}

 In particular, if $p\nmid c$, then for $a\geq 1$,
  \begin{equation} \label{eq:kkk2} N(Q;c,p^a)=p^{(a-1)(t-1)} N(Q; c,p)= p^{at-a-t_p+1 }N(Q_{I_p}; c,p)
   \end{equation}
where  $N(Q_{I_p}; c,p)$ can be computed by the formulas in Theorem~\ref{theorem:power3}.

(2) If $J_p\neq \emptyset$, i.e., $s_p\neq 0$ and $d_{p,J}=1$, then for any $a\geq 1$, for any $c\in\Z$,
\begin{equation} \label{eq:kkk3}
   N_J(Q;c,p^a)=p^{(a-1)(t-1)} N_J(Q; c, p),
 \end{equation}
\begin{equation} \label{eq:kkk4}
   N_J(Q; c, p)=(p-1)^{s-s_p} p^{t-s+s_p-t_p} \cdot  N_{J_p} (Q_{I_p}; c,p),
\end{equation}
and
\begin{equation} \label{eq:kkk5}
 N_{J_p}(Q_{I_p};c,p)=\sum\limits_{{I_p-J_p\subseteq T \subseteq I_p}}(-1)^{t_p-|T|}N(Q_{T}; c,p)
\end{equation}
where $N(Q_T; c,p)$ can be computed by the formula in Theorem~\ref{theorem:power3}.

In particular,  $N^*(Q; c, p^a)$ can be computable by the formulas above, in this case $J=I$ and $J_p=I_p$.

 (3) If  $d_{p,J}=b+1> 1$, i.e., $s_p=0$, then for $c\in \Z$,
 \begin{equation} N_J(Q; c,p^a)=p^{(a-b-1)(t-1)} N_J(Q; c,p^{b+1}).
 \end{equation}
 If moreover,  $c_p <b$, then
\begin{equation} \label{eq:njquad}
   N_J(Q; c,p^a)=\begin{cases}
    (p-1)^{s} p^{as-s} N(Q_{I-J}; c, p^a), & \text{if $a<c_p+1$}; \\
    (p-1)^{s} p^{(a-c_p-1)(t-1)+c_p s} N(Q_{I-J}; c,p^{c_p+1}), & \text{if $a\geq c_p+1$}. \end{cases}
\end{equation}
Here $N_J(Q; c, p^a)$ for $a\leq b+1$ and $N(Q_{I-J}; c, p^a)$ for $a\leq c_p+1$ can be computed by the decomposition formula \eqref{eq:nj1}.

In particular, if $p\nmid c$, then for $a\geq 1$,
 \begin{equation} N_J(Q; c,p^a)=
    (p-1)^{s} p^{at-a-s-t_p+1} N(Q_{I_p}; c,p) \end{equation}
 where $N(Q_{I_p}; c,p)$ can be computed by Theorem~\ref{theorem:power3}.
\end{theorem}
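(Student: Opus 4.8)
The plan is to prove Theorem~\ref{theorem:kkk1} as a direct assembly of Theorems A, B and C, applied in exactly the order laid out by the algorithm in Section~3.3; the only real content is bookkeeping of the exponents and checking that the hypotheses of each cited theorem are in force. Throughout we write everything over $\Z/p^a\Z$, use that $Q$ is reduced at $p$ (so $p\nmid\lambda_i$ for some $i$, hence $I_p\neq\emptyset$), and use that $p\nmid k$ forces $v_p(\lambda_i k_i)=v_p(\lambda_i)$ and hence $d_{p,J}=\min_{i\in J}\{v_p(\lambda_i)+1\}$ for any nonempty $J$; in particular $d_{p,J}=1$ exactly when $J_p\neq\emptyset$, and $d_p=d_{p,I}=1$ since $I_p\neq\emptyset$.

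First I would dispose of part (1). Since $f_p=\max\{v_p(k_i)+1\}=1$ when $p\nmid k$ (and the exceptional $p=2$, $\max v_2(k_i)=1$ case does not arise since then $2\mid k$), Theorem~C with $f_p=1$ gives $N(Q;c,p^a)=p^{(a-c_p-1)(t-1)}N(Q;c,p^{c_p+1})$ for $a\geq c_p+1$, which is \eqref{eq:kkk1}; the values $N(Q;0,p^a)$ for $a\leq c_p$ enter because $N(Q;c,p^a)=N(Q;0,p^a)$ when $p^a\mid c$. For the special case $p\nmid c$ we have $c_p=0$, so the first equality of \eqref{eq:kkk2} is Theorem~C (equivalently Theorem~B(1), as $d_p=1$) and the second equality comes from applying Proposition~\ref{prop:easy123}(1) repeatedly to strip the variables $x_i$ with $p\mid\lambda_i$: each such $i\notin J$-stripping (here $J=\emptyset$) contributes a factor $p^a$ at level $p^a$, and there are $t-t_p$ of them, giving $N(Q;c,p^a)=p^{a(t-t_p)}N(Q_{I_p};c,p^a)=p^{a(t-t_p)}p^{(a-1)(t_p-1)}N(Q_{I_p};c,p)$, which simplifies to $p^{at-a-t_p+1}N(Q_{I_p};c,p)$.

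For part (2), $J_p\neq\emptyset$ gives $d_{p,J}=1$, so Theorem~B(1) with $d_{p,J}=1$ yields \eqref{eq:kkk3} directly. For \eqref{eq:kkk4} I would again use Proposition~\ref{prop:easy123}(1) at level $p$ to strip, one at a time, the variables $x_i$ with $p\mid\lambda_i$: those with $i\notin J$ contribute a factor $p$, those with $i\in J$ (there are $s-s_p$ of them) contribute $p-1$, and the remaining equation is $N_{J_p}(Q_{I_p};c,p)$; counting the $p$-factors gives the exponent $t-t_p-(s-s_p)=t-s+s_p-t_p$, which is \eqref{eq:kkk4}. Finally \eqref{eq:kkk5} is the inclusion–exclusion formula \eqref{eq:nj3} applied to the polynomial $Q_{I_p}$ with $J_1=\emptyset$, $J_2=J_p$, ambient index set $I_p$: the sum over $T\subseteq J_p$ rewrites as a sum over $I_p-J_p\subseteq T\subseteq I_p$ with sign $(-1)^{|J_p|-|T\cap J_p|}=(-1)^{t_p-|T|}$. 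The case $J=I$ is just $I_p=J_p$, giving the stated computability of $N^*$.

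For part (3), $s_p=0$ means $d_{p,J}=b+1>1$, and the lifting formula $N_J(Q;c,p^a)=p^{(a-b-1)(t-1)}N_J(Q;c,p^{b+1})$ for $a\geq b+1$ is Theorem~B(1) with $d_{p,J}=b+1$; the values $N_J(Q;c,p^a)$ for $2\leq a\leq b+1$ are obtained from the special case \eqref{eq:nj1} of the decomposition formula, whose right-hand side involves only $N^*(Q_\bb;c,p^a)$, and since $p\nmid k$ each $Q_\bb$ is (after reducing by Proposition~\ref{prop:easy123}) of depth $1$, so $N^*(Q_\bb;c,p^a)$ is handled by part (2). For the displayed formula \eqref{eq:njquad} under the extra hypothesis $c_p<b$: since $s_p=0$, all $j\in J$ have $p\mid\lambda_j$, so Proposition~\ref{prop:easy123}(1) at level $p^a$ strips each $x_j$, $j\in J$, contributing $p^{a-1}(p-1)$ each and leaving $N(Q_{I-J};c,p^a)$; this gives $N_J(Q;c,p^a)=(p-1)^s p^{(a-1)s}N(Q_{I-J};c,p^a)$. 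For $a<c_p+1$ this is already the first branch; for $a\geq c_p+1$ apply \eqref{eq:kkk1} (part (1)) to $Q_{I-J}$, noting its dimension is $t-s$, to replace $N(Q_{I-J};c,p^a)$ by $p^{(a-c_p-1)(t-s-1)}N(Q_{I-J};c,p^{c_p+1})$; combining the exponents $(a-1)s+(a-c_p-1)(t-s-1)=(a-c_p-1)(t-1)+c_p s$ gives the second branch. The final $p\nmid c$ statement is the $c_p=0$ instance combined with the variable-stripping of \eqref{eq:kkk2} applied to $Q_{I-J}$. I anticipate the only delicate point is keeping the three exponent computations (in \eqref{eq:kkk2}, \eqref{eq:kkk4}, and the second branch of \eqref{eq:njquad}) correct and consistent about which index set and dimension each sub-polynomial $Q_T$, $Q_{I_p}$, $Q_{I-J}$ lives in, and checking that reducedness is preserved under all the stripping operations so that every invocation of Theorem~B has its hypothesis met; there is no conceptual obstacle beyond this.
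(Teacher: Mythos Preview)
Your assembly of Theorems~A, B, C and Proposition~\ref{prop:easy123} is exactly the paper's approach; in fact the paper gives no separate proof of this theorem, presenting it as a direct restatement of the algorithm in \S3.3, so you are correctly supplying the omitted details. Your exponent bookkeeping in \eqref{eq:kkk2}, \eqref{eq:kkk4} and the second branch of \eqref{eq:njquad} is right.

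There is, however, one recurring gap in the justification. In parts (1) and (3) you invoke Proposition~\ref{prop:easy123}(1) to ``strip'' the variables $x_i$ with $p\mid\lambda_i$ \emph{at level $p^a$}, claiming a factor $p^a$ (respectively $p^{a-1}(p-1)$) per stripped variable. But Proposition~\ref{prop:easy123}(1) requires $v_p(\lambda_i)\geq a$, not merely $v_p(\lambda_i)\geq 1$; for arbitrary $a$ this hypothesis can fail. The intermediate identities you write down, such as $N(Q;c,p^a)=p^{a(t-t_p)}N(Q_{I_p};c,p^a)$ in part (1) and $N_J(Q;c,p^a)=(p-1)^s p^{(a-1)s}N(Q_{I-J};c,p^a)$ in part (3), turn out to be true under the stated hypotheses, but not by direct appeal to Proposition~\ref{prop:easy123}(1). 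The clean fix is to reverse the order of the two operations: first apply the lifting formula (Theorem~B(1) or Theorem~C) to reduce to level $p$ in part (1), or to level $p^{c_p+1}$ in part (3) (where $c_p+1\leq b$ by hypothesis, so every $j\in J$ has $v_p(\lambda_j)\geq b\geq c_p+1$), and \emph{then} strip via Proposition~\ref{prop:easy123}(1), whose hypothesis is now met. In part (2) you strip at level $p$, so the issue does not arise there. With this reordering, your argument is complete and matches the paper's intended derivation.
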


\subsubsection{The quadratic case.}
In this case, we recall the following well-known result:
 \begin{proposition} Suppose $Q(x_1,\cdots, x_t)=\lambda_1 x_1^2 +\cdots +\lambda_t x_t^2$. For odd prime $p$,
let $\bigl(\frac{\cdot}{p}\bigr)$ be the Legendre symbol. If $p\nmid \prod\limits_{i=1}^t \lambda_i$, then
 \begin{equation}\label{eq:quad3} N(Q;c,p)=\begin{cases}
 p^{t-1}+\big(\frac{c\lambda_1\cdots\lambda_t}{p}\big) {\big(\frac{-1}{p}\big)}^{\frac{t-1}{2}}p^{\frac{t-1}{2}}, & \text{if $t$ odd}; \\
 p^{t-1}-\frac{1}{p}\big(\frac{\lambda_1\cdots\lambda_t}{p}\big){\big(\frac{-1}{p}\big)}^{\frac{t}{2}}p^{\frac{t}{2}}, & \text{if $t$ even and $p\nmid c$};\\
 p^{t-1}+\frac{p-1}{p}\big(\frac{\lambda_1\cdots\lambda_t}{p}\big){\big(\frac{-1}{p}\big)}^{\frac{t}{2}}p^{\frac{t}{2}}, & \text{if $t$ even and $p\mid c$}.\end{cases}
\end{equation}
\end{proposition}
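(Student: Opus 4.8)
The plan is to compute $N(Q;c,p)$ directly by detecting the congruence with additive characters and collapsing everything to the quadratic Gauss sum. Throughout write $\eta=\bigl(\tfrac{\cdot}{p}\bigr)$ for the quadratic character of $\F_p$ (with $\eta(0)=0$), $\zeta_p=e^{2\pi i/p}$, and $g=\sum_{x\in\F_p}\eta(x)\zeta_p^{x}$ for the quadratic Gauss sum. Alternatively one can invoke Theorem~\ref{theorem:power3} with $\bk=(2,\dots,2)$, in which case each $\chi_i$ in \eqref{eq:jacobi0}--\eqref{eq:jacobi} is forced to be $\eta$ and the task becomes the evaluation of $J(\eta,\dots,\eta)$ and $J_0(\eta,\dots,\eta)$; I describe the first, self-contained route.

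First I would record two standard facts. (a) For $a\in\F_p^{\times}$, $\sum_{x\in\F_p}\zeta_p^{ax^2}=\eta(a)g$: write the number of $x$ with $x^2=u$ as $1+\eta(u)$ and use $\sum_{u}\zeta_p^{au}=0$. (b) $g^{2}=\eta(-1)p$. Then, starting from $N(Q;c,p)=\tfrac1p\sum_{y\in\F_p}\zeta_p^{-yc}\prod_{i=1}^{t}\bigl(\sum_{x_i\in\F_p}\zeta_p^{y\lambda_i x_i^{2}}\bigr)$, the term $y=0$ contributes $p^{t-1}$, and by (a) the terms $y\neq0$ contribute $\tfrac{g^{t}}{p}\,\eta(\lambda_1\cdots\lambda_t)\,S$ with $S=\sum_{y\in\F_p^{\times}}\eta(y)^{t}\zeta_p^{-yc}$, using also $\eta(x^{-1})=\eta(x)$ to write $\prod_i\eta(y\lambda_i)=\eta(y)^t\eta(\lambda_1\cdots\lambda_t)$.

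Next I would evaluate $S$ in the four sub-cases and clean up powers of $g$ via (b). If $t$ is odd then $\eta(y)^t=\eta(y)$, so $S=0$ when $p\mid c$ (giving $N=p^{t-1}$, which agrees since $\bigl(\tfrac{0}{p}\bigr)=0$) and, substituting $y\mapsto -y/c$, $S=\eta(-c)g$ when $p\nmid c$; then $\tfrac{g^t}{p}\eta(\lambda_1\cdots\lambda_t)\eta(-c)g=\tfrac{g^{t+1}}{p}\eta(-1)\eta(c\lambda_1\cdots\lambda_t)$ and $g^{t+1}=(\eta(-1)p)^{(t+1)/2}$ give, after using $\eta(-1)^{(t+3)/2}=\eta(-1)^{(t-1)/2}$, the first line of the formula. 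If $t$ is even then $\eta(y)^{t}=1$ on $\F_p^{\times}$, so $S=-1$ when $p\nmid c$ and $S=p-1$ when $p\mid c$; combining with $g^{t}=(\eta(-1)p)^{t/2}$ yields the second and third lines respectively.

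The only genuine difficulty is bookkeeping: one must track the exponent of $\eta(-1)=\bigl(\tfrac{-1}{p}\bigr)$ carefully, reducing naturally occurring exponents like $(t+1)/2$ or $(t+3)/2$ to the normalized $(t-1)/2$ and $t/2$ via $\eta(-1)^2=1$, and one must be sure the substitution $y\mapsto -y/c$ is only used when $p\nmid c$ — which is precisely why the cases $p\mid c$ are separated out. On the alternative (Jacobi sum) route the corresponding point is the evaluation of the two Jacobi sums: for $t$ odd, $\eta^t=\eta\neq\mathbf 1$ and $J(\eta,\dots,\eta)=g^t/g=g^{t-1}$; for $t$ even, $\eta^t=\mathbf 1$ and one shows $J(\eta,\dots,\eta)=-g^t/p$ and $J_0(\eta,\dots,\eta)=-(p-1)J(\eta,\dots,\eta)$, after which $g^2=\eta(-1)p$ closes the argument in exactly the same way.
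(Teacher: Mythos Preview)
Your argument is correct. The paper does not actually prove this proposition: it simply cites \S8.6 of Ireland--Rosen and the paper of Agoh--Shoji, treating the formula as well known. Your additive-character computation is precisely the standard derivation (and is in fact essentially what one finds in Ireland--Rosen): detect the congruence by orthogonality, separate $y=0$, collapse each inner sum to $\eta(y\lambda_i)g$ via the quadratic Gauss sum, and then evaluate $\sum_{y\neq 0}\eta(y)^{t}\zeta_p^{-yc}$ according to the parity of $t$ and whether $p\mid c$. The bookkeeping with $\eta(-1)^{(t+3)/2}=\eta(-1)^{(t-1)/2}$ and $g^{2}=\eta(-1)p$ is handled correctly, and the observation that the $t$ odd, $p\mid c$ case is absorbed by the convention $\bigl(\tfrac{0}{p}\bigr)=0$ matches the formula as stated. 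Your alternative via Theorem~\ref{theorem:power3} with $\chi_i=\eta$ and the evaluations $J(\eta,\dots,\eta)=g^{t-1}$ (for $t$ odd) and $J(\eta,\dots,\eta)=-g^{t}/p$, $J_0(\eta,\dots,\eta)=(p-1)g^{t}/p$ (for $t$ even) is also valid and is the route more directly suggested by the paper's own framework; either way one lands on the same Gauss-sum identities.
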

\begin{proof} This  follows from \S8.6 in \cite{K. Ireland}, and can also be found in \cite{agoh}.
\end{proof}
\begin{remark} The above formula holds for $I=\emptyset$. In this case $t=0$ and $N(0;c,p)=1$ if $p\mid c$ and $0$ if not.
\end{remark}
\begin{theorem} \label{theo:quad} Suppose $Q(x_1,\cdots, x_t)=\lambda_1 x_1^2 +\cdots +\lambda_t x_t^2$ and $p\nmid \lambda_i$ for some $i\in I$.

(1) For $p$ odd, suppose $p\nmid \lambda_i$ for some $i\in I$.
Let $I_p=\{p\in I\mid p\nmid \lambda_i\}$, let $t_p=\# I_p$ and $r_p=\#\{i\in I\mid \lambda_i\ \text{is a quadratic non-residue modulo}\ p\}$. Write $p^*=p\cdot\left(\frac{-1}{p}\right)$, and for $i\geq j\geq 0$, write
 \[ A_p(i,j)=\frac{(\sqrt{p^*}+1)^{i-j}(\sqrt{p^*}-1)^{j} + (\sqrt{p^*}-1)^{i-j}(\sqrt{p^*}+1)^{j}}{2}, \]
 \[ B_p(i,j)=\frac{(\sqrt{p^*}+1)^{i-j}(\sqrt{p^*}-1)^{j} - (\sqrt{p^*}-1)^{i-j}(\sqrt{p^*}+1)^{j}}{2}. \]
Then for $a\geq 1$, we have
 \begin{equation} N^*(Q; c, p^a)=p^{(t-1)(a-1)} (p-1)^{t-t_p} N^*(Q_{I_p}; c, p), \end{equation}
where $N^*(Q_{I_p}; c, p)$ is given by
 \begin{equation}   \frac{1}{p}(p-1)^{t_p}+ \begin{cases} (-1)^{r_P} \left(\dfrac{A_p(t_p,r_p)}{\sqrt{p^*}}\bigl(\dfrac{c}{p}\bigr)+\dfrac{B_p(t_p,r_p)}{p}\right), & \text{if $2\nmid t_p$ and $p\nmid c$}; \\
  (-1)^{r_p-1}\left(\dfrac{A_p(t_p,r_p)}{p}+ \dfrac{B_p(t_p,r_p)}{\sqrt{p^*}}\bigl(\dfrac{c}{p}\bigr)\right), & \text{if $2\mid t_p$ and $p\nmid c$};\\
  (-1)^{r_p-1} \dfrac{(p-1)B_p(t_p,r_p)}{p},  & \text{if $2\nmid t_p$ and $p\mid c$};\\
  (-1)^{r_p}\dfrac{(p-1)A_p(t_p,r_p)}{p}, & \text{if $2\mid t_p$ and $p\mid c$}.\end{cases}
 \end{equation}

(2) Moreover, for $J\subseteq I$ such that $d_{p,J}=1$, i.e., if there exists $i\in J$ such
that $p\nmid \lambda_j$.
Let $J_p=\{p\in I\mid p\nmid \lambda_i\}$, let $s=\#J$, $s_p=\# J_p$ and $r_{p,J}=\#\{i\in J\mid \lambda_i\ \text{is a quadratic non-residue modulo}\ p\}$. Then for $a\geq 1$, we have
 \begin{equation} \label{eq:quadnj1} N_J(Q; c, p^a)=p^{(t-1)(a-1)} p^{t-t_p-s+s_{p}}(p-1)^{s-s_{p}} N_{J_p}(Q_{I_p}; c, p), \end{equation}
where
 \begin{equation} \label{eq:quadnj2}  \begin{split} N_{J_p}(Q_{I_p}; c, p)=&(p-1)^{s_{p}} p^{t_p-s_{p}-1} +(-1)^{r_p} (\sqrt{p^*})^{t_p-s_p}\\ \times & \begin{cases} \left(\dfrac{A_p(s_p,r_{p,J})}{\sqrt{p^*}}\bigl(\dfrac{c}{p}\bigr)+\dfrac{B_{p}(s_p,r_{p,J})}{p}\right), & \text{if $2\nmid t_p$ and $p\nmid c$}; \\
   \left(-\dfrac{A_{p}(s_p, r_{p,J})}{p}- \dfrac{B_{p}(s_p,r_{p,J})}{\sqrt{p^*}}\bigl(\dfrac{c}{p}\bigr)\right), & \text{if $2\mid t_p$ and $p\nmid c$};\\ \dfrac{(1-p)B_{p}(s_p,r_{p,J})}{p},  & \text{if $2\nmid t_p$ and $p\mid c$};\\
   \dfrac{(p-1)A_{p}(s_p,r_{p,J})}{p}, & \text{if $2\mid t_p$ and $p\mid c$}.\end{cases} \end{split}
 \end{equation}

(3) For $p=2$, for $J\subseteq I$ such that $d_{2,J}=3$, i.e. if there exists $j\in J$ such that $2\nmid \lambda_j$, then for $a\geq 3$,
 \begin{equation}
 N_J(Q; c, 2^a)=2^{(t-1)(a-3)} N_J(Q; 2, 8);
 \end{equation}
and for $1\leq a\leq 3$,
 \begin{equation}
 N_J(Q; c, 2^a)=2^{(a-1)t}\cdot \#\{J\subseteq T\subseteq I\mid v_2(\sum\limits_{i\in T}\lambda_i-c)\geq a\}.
 \end{equation}

 In particular, for $J=I$, let $c'_2=v_2(\sum\limits_{i\in I}\lambda_i-c)$. Then
  \begin{equation} N^*(Q; c, 2^a)=\begin{cases} 2^{at-a-t+3}, & \text{if}\ a\geq 3\ \text{and}\ c'_2\geq 3;\\ 2^{(a-1)t}, &\text{if}\ a\leq 3\ \text{and}\ c'_2\geq a;\\ 0, &\text{in other cases}.
  \end{cases}
  \end{equation}
\end{theorem}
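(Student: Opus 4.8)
\textbf{Proof proposal for Theorem~\ref{theo:quad}.}
The plan is to treat the three parts by applying the general machinery of Theorems A, B and C together with the explicit value of $N(Q;c,p)$ in the quadratic case quoted above. For part (1), since $Q$ is reduced there is $i\in I$ with $p\nmid\lambda_i$, so $d_{p,J}=1$ whenever $J\cap I_p\neq\emptyset$; in particular $d_{p,I}=1$, so Theorem~B(1) immediately gives $N^*(Q;c,p^a)=p^{(t-1)(a-1)}N^*(Q;c,p)$, reducing everything to level $p$. To get $N^*(Q;c,p)$ I would first use Proposition~\ref{prop:easy123}(1) (lowering dimension, applied to each $i\notin I_p$, for which $v_p(\lambda_i)\geq 1=a$) to strip off the variables $x_i$ with $p\mid\lambda_i$; each such variable in $I$ contributes a factor $p-1$ (it must be a unit since we are counting $N^*$), giving $N^*(Q;c,p)=(p-1)^{t-t_p}N^*(Q_{I_p};c,p)$. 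Then $N^*(Q_{I_p};c,p)$ is computed from $N(Q_T;c,p)$ for $T\subseteq I_p$ via the inclusion-exclusion formula \eqref{eq:nj6}, plugging in the quadratic formula \eqref{eq:quad3}. The functions $A_p(i,j)$, $B_p(i,j)$ are exactly what one gets when summing $\prod_{i\in T}(\pm\sqrt{p^*})$-type contributions over subsets $T$: writing each $\lambda_i$'s Legendre symbol as $+1$ or $-1$, the binomial-type sum over $T$ separates into a residue part and a non-residue part, and the symmetric/antisymmetric combinations $A_p$, $B_p$ collect the even-size and odd-size contributions. Matching the four cases (parity of $t_p$, and $p\mid c$ versus $p\nmid c$) against \eqref{eq:quad3} is then a bookkeeping computation.

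For part (2), the structure is the same but one must keep track of which stripped variables are required to be units. When $d_{p,J}=1$ Theorem~B(1) again gives the lifting factor $p^{(t-1)(a-1)}$. Lowering dimension via Proposition~\ref{prop:easy123}(1) on the variables $i\notin I_p$ now splits into two cases: $i\in J$ contributes $p-1$ (and removes $i$ from $J$), while $i\notin J$ contributes $p$; this yields the prefactor $p^{t-t_p-s+s_p}(p-1)^{s-s_p}$ and reduces to $N_{J_p}(Q_{I_p};c,p)$. For the latter I would use \eqref{eq:nj6} in the form summing over $T\subseteq J_p$, i.e. $N_{J_p}(Q_{I_p};c,p)=\sum_{T\subseteq J_p}(-1)^{|T|}N(Q_{I_p-T};c,p)$, and again substitute \eqref{eq:quad3}. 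The difference from part (1) is that now only the $|J_p|=s_p$ many variables in $J_p$ are being inclusion-excluded, while the other $t_p-s_p$ unit-free variables in $I_p-J_p$ sit "inside" the Jacobi/Legendre factor; this is precisely the source of the extra global factor $(\sqrt{p^*})^{t_p-s_p}$ and explains why $A_p$, $B_p$ appear with arguments $(s_p,r_{p,J})$ rather than $(t_p,r_p)$. One subtlety: the case split in \eqref{eq:quadnj2} is governed by the parity of $t_p$ (not $s_p$), because the overall sign/shape of the quadratic Gauss-sum contribution in \eqref{eq:quad3} depends on the total number of surviving squares, which is $t_p$; I would be careful to verify that the parities work out consistently, in particular when $t_p-s_p$ is odd so that $(\sqrt{p^*})^{t_p-s_p}$ is not an integer but combines with the half-integer terms in the braces to give an integer.

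For part (3), $p=2$: here $x^2$ is not the identity on $\Z/2^a\Z$, so one uses the $p=2$ half of the machinery. Since $Q$ is reduced there is $j$ with $2\nmid\lambda_j$ and $k_j=2$, so $d_{2,j}=v_2(\lambda_j k_j)+2=v_2(2)+2=3$, hence $d_{2,J}=3$ when $J$ contains such a $j$. Theorem~B(1) then gives $N_J(Q;c,2^a)=2^{(t-1)(a-3)}N_J(Q;c,2^3)$ for $a\geq 3$, and one checks $N_J(Q;c,8)$ is independent of $c$ among the relevant residues (this is the "$N_J(Q;2,8)$" in the statement; any $c$ with the same reachable set works, and Theorem~C with $f_2=3$, $c_2=0$ handles the unit case). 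For $1\leq a\leq 3$ we have $a\leq d_{2,I}=3$, so Theorem~B(2) applies: $N_J(Q;c,2^a)=2^{(a-1)t}\#\varphi_{a,J}^{-1}(c\bmod 2^a)$, and the final displayed formula in Theorem~B(2) identifies $\#\varphi_{a,J}^{-1}(c\bmod 2^a)$ with $\#\{T\supseteq J: v_2(\sum_{i\in T}\lambda_i-c)\geq a\}$, since every odd square is $\equiv 1\bmod 8$ and every even square is $\equiv 0\bmod 4$, so $Q(\bx)\bmod 8$ depends only on the support $T=\{i:x_i\text{ odd}\}$. The specialization to $J=I$ is then immediate: the only candidate set is $T=I$ itself, it qualifies iff $v_2(\sum_{i\in I}\lambda_i-c)=c_2'\geq a$, which gives the three-case formula.

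\textbf{Main obstacle.} The genuinely delicate part is the combinatorial identity in parts (1) and (2): showing that the alternating sum over subsets $T$ of the quadratic formula \eqref{eq:quad3}, with the three-way internal case split of \eqref{eq:quad3} (which itself depends on parity of $|T|$ and on $p\mid c$), collapses exactly into the closed forms involving $A_p(t_p,r_p)$, $B_p(t_p,r_p)$ (resp. $A_p(s_p,r_{p,J})$, $B_p(s_p,r_{p,J})$). I expect this to require separating the sum according to how many non-residue $\lambda_i$ are included, recognizing $\sum_j\binom{i-j}{\cdot}\binom{j}{\cdot}(\pm\sqrt{p^*})^{(\cdot)}$ as the expansion of $(\sqrt{p^*}\pm 1)^{i-j}(\sqrt{p^*}\mp 1)^{j}$, and then carefully tracking signs $(-1)^{r_p}$ or $(-1)^{r_{p,J}}$ coming from the non-residue Legendre symbols. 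The parity-of-$t_p$ versus parity-of-$s_p$ mismatch in part (2), and keeping the $p\mid c$ versus $p\nmid c$ branches aligned through the inclusion-exclusion, are where the computation is most error-prone; everything else is a direct application of Theorems A--C and Proposition~\ref{prop:easy123}.
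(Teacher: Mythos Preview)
Your proposal is correct and follows essentially the same route as the paper: reduce to level $p$ via Theorem~B(1), strip the $p\mid\lambda_i$ variables to land in $N_{J_p}(Q_{I_p};c,p)$, apply the inclusion--exclusion formula \eqref{eq:nj6} together with \eqref{eq:quad3}, and evaluate the resulting alternating sum by separating main term from error term and recognizing the binomial expansions of $(\sqrt{p^*}\pm 1)^{\cdot}$; Part~(3) is handled exactly as you say via Theorem~B(2). The only structural shortcut the paper takes that you do not mention is observing that Part~(1) is literally the special case $J=I$ of Part~(2), so only \eqref{eq:quadnj2} needs an explicit computation; your plan to verify the combinatorics by splitting over even/odd subset sizes and residue/non-residue counts is precisely what the paper does.
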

\begin{remark} For general $Q$ (reduced or not), if we replace the assumption $p\nmid \lambda_i$ for some $i\in J$ by the assumption $\min\{v_p(\lambda_i)\mid i\in I\}= \min\{v_p(\lambda_i)\mid i\in J\}$, along with Proposition~\ref{prop:easy123}(2), we get the formula for $N_J(Q; c, p^a)$ for all $c\in \Z$ and $a\geq 1$.
\end{remark}

\begin{proof} Part (3) follows from Theorem~B(2),  Part (1) is a special case of (2), and \eqref{eq:quadnj1} follows from Theorem~B(1), we just need to prove  \eqref{eq:quadnj2} in Part (2).

By the Inclusion-Exclusion principle, we know
 \[ N_{J_p}(Q_{I_p};c,p)=\sum_{T\subseteq J_p} (-1)^{|T|} N(Q_{I_p-T}; c, p). \]
We use \eqref{eq:quad3} and the above formula to compute $N_{J_p}(Q_{I_p};c,p)$. We compute the main term and the error term separately. The main term is
 \[ \sum_{T\subseteq J_p} (-1)^{|T|} p^{t_p-|T|-1}= (p-1)^{s_{p}} p^{t_p-s_{p}-1}. \]
For the error term, we need the following identities
 \[ \sum_{i\ even} \binom{n}{i} x^i =\frac{(1+x)^n+(1-x)^n}{2}, \]
 \[ \sum_{i\ odd} \binom{n}{i} x^i =\frac{(1+x)^n-(1-x)^n}{2}. \]
In the case $t_p$ is odd and $p\nmid c$, for the subset $T$ of even order,  suppose there are $i$ quadratic residues in $\{\lambda_m\mid m\in T\}$ and $j$ quadratic non-residues, the contribution of the error term in $N(Q_{I_p-T};c, p)$ is
 \[ (-1)^{r_{p}} \left(\frac{c}{p}\right) (\sqrt{p^*})^{t_p-1}\times  (-1)^j (\sqrt{p^*})^{-i-j}. \]
So the contribution for all $T$ of even order is  $(-1)^{r_{p}} \left(\frac{c}{p}\right) (\sqrt{p^*})^{t_p-1}\times $
 \[ \begin{split} & \sum_{i+j\ even} \binom{s_p-r_{p,J}}{i} \binom{r_{p,J}}{j} (-1)^j (\sqrt{p^*})^{-i-j}\\ =& \sum_{i\ even} \binom{s_p-r_{p,J}}{i} (\sqrt{p^*})^{-i}  \sum_{j\ even} \binom{r_{p,J}}{j} (\sqrt{p^*})^{-j}\\ & -\sum_{i\ odd} \binom{s_p-r_{p,J}}{i} (\sqrt{p^*})^{-i}  \sum_{j\ odd} \binom{r_{p,J}}{j} (\sqrt{p^*})^{-j}, \end{split} \]
which is
 \[  (-1)^{r_{p}} (\sqrt{p^*})^{t_p-s_p-1}\left(\frac{c}{p}\right) A_p(s_p, r_{p,J}). \]
Similarly for all $T$ of odd order, the error term contribution is
 \[ \begin{split} &\frac{(-1)^{r_{p}} }{p} (\sqrt{p^*})^{t_p} \sum_{i+j\ odd} \binom{s_p-r_{p,J}}{i} \binom{r_{p,J}}{j} (-1)^j (\sqrt{p^*})^{-i-j}\\ =& (-1)^{r_{p}} (\sqrt{p^*})^{t_p-s_p}\frac{B_p(s_p, r_{p,J})}{p}. \end{split} \]
The other three cases in \eqref{eq:quadnj2} are obtained by the same method.
\end{proof}

\subsubsection{The case $t=2$ and $p\nmid k$.}

For this case, note that if $p\nmid \lambda_1$, let $\lambda_1^{-1}$ be the $p$-adic inverse of $\lambda_1$, then
 \[ N_J(\lambda_1x^k_1+\lambda_2 x_2^k; c, p^a)= N_J(x^k_1+\lambda^{-1}_1\lambda_2 x_2^k; \lambda^{-1}_1c, p^a). \]
Thus we may assume
 \[ Q(x_1, x_2)=x^k_1+\lambda p^e x_2^k \]
such that $p\nmid \lambda$ and $e\geq 0$. We want to compute $N_J(c, p^a)$ for $J=\emptyset,\ \{1\}, \{2\}$ and $I=\{1,2\}$, $c\in \Z$ and $a\geq 1$.

If $p\nmid c$ and $e=0$, by Theorem~\ref{theorem:power3} and note that $J_0(\chi,\chi^{-1})=(p-1)\chi(-1)$ if $\chi\neq 1$, $=p$ if $\chi= 1$, then
 \begin{align} \label{eq:ncpt2} N(c,p)&=p+\sum\limits_{\substack{\chi_1, \chi_2\\ \chi_i^k=1, \chi_i \neq 1}}\chi_1\chi_2(c)\chi_2(\lambda^{-1})J(\chi_1,\chi_2), \\ \label{eq:n0pt2}
 N(0,p)&=1+(p-1)\sum\limits_{\chi:\,\chi^k=1}\chi(-\lambda).
 \end{align}
For $J=\{1\}$ or $I$, then $d_{p,J}=1$. By Theorem B, we have $N_J(c, p^a)=p^{a-1} N_J(c, p)$. Then by \eqref{eq:nj6}, we have

 \begin{proposition} Let $Q(x_1, x_2)=x^k_1+\lambda p^e x_2^k$
such that $p\nmid \lambda k$ and $e\geq 0$. Then
 \begin{equation}
 N_{\{1\}}(c, p^a)=\begin{cases} p^{a-1}(N(c,p)-\sum\limits_{\chi:\,\chi^k=1} \chi(\lambda^{-1}c)), & \text{if} \ e=0\ \text{and}\ p\nmid c; \\ p^a\cdot\sum\limits_{\chi:\,\chi^k=1} \chi(c), & \text{if} \ e\geq 1\ \text{and}\ p\nmid c;\\
 p^{a-1}(N(0,p)-1), & \text{if}  \ e= 0\ \text{and}\ p\mid c;
 \\ 0, & \text{if}  \ e\geq 1\ \text{and}\ p\mid c. \end{cases} \end{equation}
  \begin{equation}
 N^*(c, p^a)=\begin{cases} \label{eq:n*pt2}
 p^{a-1}(N(c,p)-\sum\limits_{\chi:\, \chi^k=1} (\chi(c)+ \chi(\lambda^{-1}c))), & \text{if} \ e=0\ \text{and}\ p\nmid c; \\ p^{a-1}(p-1)\sum\limits_{\chi:\, \chi^k=1} \chi(c), & \text{if} \ e\geq 1\ \text{and}\ p\nmid c;\\
 p^{a-1}(N(0,p)-1), & \text{if}  \ e= 0\ \text{and}\ p\mid c;\\
 0, & \text{if}  \ e\geq 1\ \text{and}\ p\mid c. \end{cases} \end{equation}
Here $N(c,p)$ and $N(0,p)$ are given by \eqref{eq:ncpt2} and \eqref{eq:n0pt2} respectively.
 \end{proposition}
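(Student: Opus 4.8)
The plan is to reduce from level $p^a$ to level $p$ via the lifting formula of Theorem~B, and then to evaluate $N_{\{1\}}(c,p)$ and $N^*(c,p)$ over $\F_p$ using the inclusion--exclusion identity \eqref{eq:nj6} together with the elementary count of $k$-th power residues in $\F_p$.

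First I would observe that in both cases $J=\{1\}$ and $J=I$ one has $1\in J$, and since $\lambda_1=1$ and $p\nmid k$ this forces $d_{p,J}=1$ (when $p=2$ the hypothesis $2\nmid\lambda k$ makes $k$ odd, so the $p=2$ clause in the definition of the depth also gives $1$). Since $Q$ is reduced at $p$ (indeed $p\nmid\lambda_1$), Theorem~B(1) applies with $d_{p,J}=1$ and yields $N_J(Q;c,p^a)=p^{(t-1)(a-1)}N_J(Q;c,p)=p^{a-1}N_J(Q;c,p)$ for every $a\ge 1$; this is precisely the power of $p$ displayed in front of each formula, so it remains to identify $N_{\{1\}}(c,p)$ and $N^*(c,p)$.

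I would then split according to whether $e\ge 1$ or $e=0$. If $e\ge 1$ the reduction of $Q$ mod $p$ is $x_1^k$, so a point of $(\Z/p\Z)^2$ lies in $\Gamma_J(Q;c,p)$ iff $x_1^k\equiv c$ (together with $x_2\in(\Z/p\Z)^\times$ when $J=I$); the free variable $x_2$ contributes a factor $p$ for $J=\{1\}$ and $p-1$ for $J=I$, while $\#\{x_1\in\F_p:x_1^k\equiv c\}=\sum_{\chi^k=1}\chi(c)$ when $p\nmid c$ and is $0$ when $p\mid c$ (because $x_1$ is forced to be a unit), which after multiplication by $p^{a-1}$ gives the $e\ge 1$ rows. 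If $e=0$ the reduction is $x_1^k+\lambda x_2^k$ and \eqref{eq:nj6} gives $N_{\{1\}}(c,p)=N(Q;c,p)-N(\lambda x_2^k;c,p)$ and $N^*(c,p)=N(Q;c,p)-N(\lambda x_2^k;c,p)-N(x_1^k;c,p)+N(0;c,p)$; here $N(Q;c,p)$ is $N(c,p)$ or $N(0,p)$ as already found in \eqref{eq:ncpt2} and \eqref{eq:n0pt2}, the one-variable terms are $\sum_{\chi^k=1}\chi(\lambda^{-1}c)$ and $\sum_{\chi^k=1}\chi(c)$ when $p\nmid c$ and are each $1$ when $p\mid c$, and $N(0;c,p)$ is $0$ or $1$ according as $p\nmid c$ or $p\mid c$. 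Substituting and multiplying by $p^{a-1}$ reproduces the remaining rows.

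The one delicate point, which I would flag carefully, is the boundary value at $c\equiv 0\bmod p$: there the power-residue count $\#\{x\in\F_p:x^k=0\}=1$ is not equal to $\sum_{\chi^k=1}\chi(0)=0$, so the cases $p\mid c$ genuinely need separate treatment, and it is exactly this discrepancy that produces the term $-1$ in the rows with $p\mid c$. Apart from this, everything is routine bookkeeping with the identity $\#\{x\in\F_p^\times:x^k=b\}=\sum_{\chi:\chi^k=1}\chi(b)$ for $b\ne 0$.
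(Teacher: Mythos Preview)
Your proposal is correct and follows essentially the same route as the paper: the paper also reduces to level $p$ via Theorem~B (noting $d_{p,J}=1$ since $1\in J$) and then invokes the inclusion--exclusion identity \eqref{eq:nj6} to compute $N_J(Q;c,p)$. Your write-up is simply more explicit, spelling out the one-variable character counts and the $e\ge 1$ versus $e=0$ case split that the paper leaves to the reader; the only cosmetic difference is that for $e\ge 1$ you compute $N_J(c,p)$ directly rather than through \eqref{eq:nj6}, but this is immaterial.
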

 \begin{remark} In the quadratic case, Theorem~\ref{theo:quad} gives more precise formulas for the cases $J=\{1\}$ or $I$, or $J=\{2\}$ and $e=0$.
 \end{remark}

For $J=\emptyset$ and $\{2\}$, the situation for $N_J(c, p^a)$ is much more complicated.  We first have
\begin{proposition} Let $Q(x_1, x_2)=x^k_1+\lambda p^e x_2^k$
such that $p\nmid \lambda k$ and $e\geq 0$. For $c\neq 0$, let $c_p$ be the $p$-adic valuation of $c$ and $c'= c/p^{c_p}$. For $c=0$, let $c_p=+\infty$. Let $J=\{2\}$ or $\emptyset$. Then
\begin{enumerate}
 \item $N_J(Q;c, p^a)=p^{a-c_p-1} N_J(Q; c, p^{c_p+1})$ for $c\neq 0$.
 \item If $e\geq a$, then $N_{\{2\}}(Q; c, p^a)=p^{a-1}(p-1)N(x_1^k; c, p^a)$ and $N(Q; c, p^a)=p^{a}N(x_1^k; c, p^a)$, and
  \begin{equation}
  N(x_1^k; c, p^a)=\begin{cases} p^{a-\lceil\frac{a}{k}\rceil}, & \text{if}\ c_p\geq a;\\ p^{c_p-\frac{c_p}{k}}\sum\limits_{\chi:\ \chi^k=1} \chi(c'), & \text{if}\ k\mid c_p<a;\\ 0, & \text{if}\ k\nmid c_p<a. \end{cases}
  \end{equation}
  Here $\lceil x\rceil$ meanings the smallest integer $\geq x$.

 \item  If $e<a$,  $N_{\{2\}}(Q;c, p^a)=p^{a-e-1} N_{\{2\}}(Q; c, p^{e+1})$.
\end{enumerate}
\noindent Consequently, the study of $N_J(Q;c, p^a)$ for the set $J=\emptyset$ and $\{2\}$ is reduced to the study of $N(Q; up^{a}, p^{a+1})$ for  $u\in \{0, \cdots,p-1\}$ and $e\leq a$, and $N_{\{2\}}(Q;  up^{e}, p^{e+1})$ for $u\in \{0,\cdots, p-1\}$.
\end{proposition}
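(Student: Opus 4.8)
The plan is to obtain (1), (2), (3) as quick specializations of the results already proved, and then to assemble the concluding reduction by a case analysis on the relative sizes of $c_p$ and $e$.

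\emph{Parts (1) and (3).} Since $p\nmid k$ we have $v_p(k)=0$, so $f_p=1$ (the $p=2$ exception in the definition of $f_p$ does not apply), and $Q=x_1^k+\lambda p^ex_2^k$ is reduced at $p$ because its leading coefficient is $1$. With $t=2$, Theorem C then gives $N_J(Q;c,p^a)=p^{a-c_p-1}N_J(Q;c,p^{c_p+1})$ for $a\ge c_p+1$ and any $J$, which is (1). For (3) I would compute the depth: for $p$ odd, $d_{p,\{2\}}=v_p(\lambda_2k_2)+1=v_p(\lambda p^ek)+1=e+1$, and for $p=2$ (so $k$ odd) likewise $d_{2,\{2\}}=v_2(\lambda 2^ek)+1=e+1$; then Theorem B(1) with $J=\{2\}$, $t=2$ gives $N_{\{2\}}(Q;c,p^a)=p^{a-e-1}N_{\{2\}}(Q;c,p^{e+1})$ for $a>e$, which is (3).

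\emph{Part (2).} When $e\ge a$ one has $v_p(\lambda_2)=e\ge a$, so Proposition~\ref{prop:easy123}(1) with $j=2$ gives $N(Q;c,p^a)=p^aM$ and $N_{\{2\}}(Q;c,p^a)=p^{a-1}(p-1)M$, where $M:=N(x_1^k;c,p^a)$. To evaluate $M$ I would argue only with valuations. If $c_p\ge a$, then $x^k\equiv c\bmod p^a$ means $p^a\mid x^k$, i.e.\ $v_p(x)\ge\lceil a/k\rceil$, so $M=p^{a-\lceil a/k\rceil}$. If $c_p<a$, any solution has $v_p(x^k)=c_p<a$, forcing $k\mid c_p$ (else $M=0$) and $v_p(x)=c_p/k=:m$; writing $x=p^my$ with $y\in(\Z/p^{a-m}\Z)^\times$ and $c=p^{c_p}c'$, the congruence becomes $y^k\equiv c'\bmod p^{a-c_p}$, and since $(\Z/p^{a-m}\Z)^\times\to(\Z/p^{a-c_p}\Z)^\times$ is $p^{c_p-m}$-to-one I get $M=p^{c_p-m}\,N^*(x_1^k;c',p^{a-c_p})$. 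The one genuinely structural ingredient, which I view as the heart of the matter, is the identity $N^*(x_1^k;c',p^{b})=\sum_{\chi^k=1}\chi(c')$ (sum over multiplicative characters of $\F_p$) for $b\ge1$, $p\nmid c'$: because $p\nmid k$ we have $\gcd(k,\varphi(p^b))=\gcd(k,p-1)$, so in $(\Z/p^b\Z)^\times$ being a $k$-th power is detected modulo $p$ and the number of $k$-th roots of a $k$-th power is $\gcd(k,p-1)$, which is exactly the value of the character sum. Substituting $b=a-c_p$ and $m=c_p/k$ finishes (2).

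\emph{The concluding reduction.} Here I would chain (1), (3) and Proposition~\ref{prop:easy123}(1) for $J=\emptyset$ and $J=\{2\}$: when $c\ne0$ use (1) to descend to level $c_p+1$, and when $J=\{2\}$ use (3) to descend to level $e+1$. If the level reached is $\le e$ — equivalently $c_p<e$ — then $\lambda p^ex_2^k\equiv0$ there, so $Q\equiv x_1^k$ and the count is $p^{c_p+1}M$ (resp.\ $p^{c_p}(p-1)M$), explicit by (2); likewise the levels $a\le e$ in the case $J=\emptyset$, $c=0$ are explicit, as $Q\equiv x_1^k\bmod p^a$ there. What remains are exactly $N(Q;up^a,p^{a+1})$ for $u\in\{0,\dots,p-1\}$ and $a\ge e$ (covering $J=\emptyset$, with $u=0$ for $c=0$, where (1) does not help and all larger levels follow from the $a'=c_p$ instance of (1)), and $N_{\{2\}}(Q;up^e,p^{e+1})$ for $u\in\{0,\dots,p-1\}$ (covering $J=\{2\}$, since after (3) everything sits at level $e+1$, where $c\equiv up^e$ once $c_p\ge e$, while $c_p<e$ was already settled). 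The only real care in writing this up is tracking which descents land below $e$ rather than at or above it; the main obstacle overall is the $k$-th power residue count in (2).
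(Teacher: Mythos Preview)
Your proof is correct and follows essentially the same route as the paper: (1) via Theorem~C with $f_p=1$, (3) via Theorem~B(1) after computing $d_{p,\{2\}}=e+1$, and (2) via Proposition~\ref{prop:easy123}(1) together with a direct valuation count, with the paper invoking $N^*(x^k;c',p^{a-c_p})=N^*(x^k;c',p)=\sum_{\chi^k=1}\chi(c')$ exactly as you do. Your explicit justification of the character-sum identity (using $\gcd(k,\varphi(p^b))=\gcd(k,p-1)$) and your more detailed case analysis for the concluding reduction are welcome additions, but the argument is the same.
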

\begin{proof} Part (1) follows from Theorem C and Part (3) follows from Theorem B. The first half of (2) follows from Proposition~\ref{prop:easy123}(1).  For the second half of (2), the solutions of $x_1^k\equiv 0\pmod{p^a}$ are of the form $x_1=p^{\lceil\frac{a}{k}\rceil} x_1'$ for $x_1'$ arbitrary. If $c_p<a$, then $x_1^k\equiv c\pmod{p^a}$ is solvable only if $k\mid c_p$, in this case
 \[ N(x_1^k; c, p^a)= p^{c_p-c_p/k}N^*(x^k; c', p^{a-c_p})=p^{c_p-c_p/k}N^*(x^k; c', p), \]
but $N^*(x^k; c', p)=N(x^k; c', p)=\sum\limits_{\chi:\ \chi^k=1} \chi(c')$.
\end{proof}

For the quadratic case, we have
\begin{proposition} \label{prop:4.7}
Let $Q(x_1,x_2)=x_1^2+\lambda p^e x_2^2$ such that $p\nmid 2\lambda$.  Then

(1) For $u\in \{1,\cdots, p-1\}$,
 \begin{equation} N_{\{2\}}(up^e,p^{e+1})=\begin{cases}
 p^{\frac{3e+1}{2}}(1+\bigl(\frac{\lambda u}{p}\bigr)), & \text{if} \ 2 \nmid e; \\
 p^{\frac{3e}{2}}(p-\bigl(\frac{-\lambda}{p}\bigr)-\bigl(\frac{u}{p}\bigr)-1), & \text{if} \ 2 \mid e.
\end{cases}
\end{equation}
For $u=0$,
\begin{equation} N_{\{2\}}(0,p^{e+1})=\begin{cases}
0, & \text{if} \ 2 \nmid e; \\
p^{\frac{3e}{2}}(p-1)(1+\bigl(\frac{-\lambda}{p}\bigr)), & \text{if} \ 2 \mid e.
\end{cases}
\end{equation}

(2) For $u\in \{1,\cdots, p-1\}$ and  $a \geq e$,
\begin{equation}  N(up^{a},p^{a+1})=p^{\frac{2a+e}{2}}\cdot  \begin{cases}
\sqrt{p}(1+\bigl(\frac{u}{p}\bigr)), & \text{if} \ 2 \nmid e \ and \  2 \mid a ; \\
\sqrt{p}(1+\bigl(\frac{\lambda u}{p}\bigr)), & \text{if} \ 2 \nmid e \ and \  2 \nmid a ; \\
(\frac{(a-e)(p-1)}{2}(1+\bigl(\frac{-\lambda}{p}\bigr))+(p-\bigl(\frac{-\lambda}{p}\bigr))), & \text{if} \ 2 \mid e \ and \  2 \mid a ; \\
\dfrac{(a-e+1)(p-1)}{2}(1+\bigl(\frac{-\lambda}{p}\bigr)), & \text{if} \ 2 \mid e \ and \  2 \nmid a.
\end{cases}  \end{equation}
For $e<a$,
 \begin{equation} N(0,p^{a}) =\begin{cases}
p^{\frac{2a+e-1}{2}}, & \text{if} \ 2 \nmid e; \\
p^{\frac{2a+e}{2}}(\frac{(a-e)(p-1)}{2p}(1+\bigl(\frac{-\lambda}{p} \bigr))+1), & \text{if} \ 2 \mid e \ and \  2 \mid a ; \\
p^{\frac{2a+e}{2}}(\frac{(a-e+1)(p-1)}{2p}(1+\bigl(\frac{-\lambda}{p} \bigr))+1), & \text{if} \ 2 \mid e \ and \  2 \nmid a.
\end{cases}  \end{equation}
\end{proposition}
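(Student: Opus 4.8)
The plan is to evaluate both quantities by direct counting, reducing everything to the elementary count of square roots modulo a prime power. Write $p$ for our (odd) prime, so $p\nmid 2\lambda$, and for $N\geq 1$, $w\in\Z$ put $\rho_N(w)=\#\{x\bmod p^N:x^2\equiv w\bmod p^N\}$. A preliminary observation (Hensel lifting together with the structure of $(\Z/p^N\Z)^\times$, essentially Lemma~\ref{lemma:easy}) is that $\rho_N(w)=p^{\lfloor N/2\rfloor}$ when $v_p(w)\geq N$; $\rho_N(w)=0$ when $v_p(w)<N$ and $v_p(w)$ is odd; and $\rho_N(w)=p^{v_p(w)/2}\bigl(1+\bigl(\tfrac{w/p^{v_p(w)}}{p}\bigr)\bigr)$ when $v_p(w)<N$ and $v_p(w)$ is even. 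Since $Q(x_1,x_2)=x_1^2+\lambda p^e x_2^2$, for every level $p^N$ and every $c$ we have $N_{\{2\}}(Q;c,p^N)=\sum_{x_2\in(\Z/p^N\Z)^\times}\rho_N(c-\lambda p^e x_2^2)$ and $N(Q;c,p^N)=\sum_{x_2\in\Z/p^N\Z}\rho_N(c-\lambda p^e x_2^2)$, so the whole proposition amounts to evaluating these two sums at the relevant small levels. (The same reductions can be obtained instead by iterating the decomposition formula \eqref{eq:nj1} and applying Theorem~B and the quadratic formula \eqref{eq:quad3}; the amount of bookkeeping is the same.)

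For part~(1) I would set $N=e+1$ and $c=up^e$, so $c-\lambda p^e x_2^2=p^e(u-\lambda x_2^2)$ and $\rho_{e+1}$ of this depends only on $w:=x_2\bmod p$. Grouping the $x_2\in(\Z/p^{e+1}\Z)^\times$ by $w\in\F_p^\times$ (each with $p^e$ lifts), the sum splits according to whether $\lambda w^2\equiv u\bmod p$ (then $v_p\bigl(p^e(u-\lambda x_2^2)\bigr)\geq e+1$, so $\rho_{e+1}=p^{\lfloor(e+1)/2\rfloor}$, and the number of such $w$ is $1+\bigl(\tfrac{\lambda u}{p}\bigr)$) or not (then $v_p=e$, so $\rho_{e+1}=0$ for $e$ odd and $p^{e/2}\bigl(1+\bigl(\tfrac{u-\lambda w^2}{p}\bigr)\bigr)$ for $e$ even), handling $u=0$ separately. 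The only non-elementary input is the value of $\sum_{w\in\F_p^\times}\bigl(\tfrac{u-\lambda w^2}{p}\bigr)$: using $\#\{w:w^2=s\}=1+\bigl(\tfrac{s}{p}\bigr)$ and $\sum_{s\in\F_p}\bigl(\tfrac{s}{p}\bigr)=0$ one reduces it to $\sum_{s\in\F_p}\bigl(\tfrac{s(u-\lambda s)}{p}\bigr)=-\bigl(\tfrac{-\lambda}{p}\bigr)$. Collecting terms and separating $2\mid e$ from $2\nmid e$ then gives the stated formulas for $u\in\{1,\dots,p-1\}$ and for $u=0$.

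For part~(2) I would take $N=a+1$, $c=up^a$ (and $N=a$, $c=0$ for the last formula), and now group the $x_2\in\Z/p^N\Z$ by $j:=v_p(x_2)$, there being $p^{N-j-1}(p-1)$ of them for each $j<N$, plus $x_2=0$. Three valuation ranges arise. If $e+2j<a$, the unit part of $c-\lambda p^e x_2^2$ is $\equiv-\lambda(x_2/p^j)^2\bmod p$, so $\rho_N$ equals the constant $p^{(e+2j)/2}\bigl(1+\bigl(\tfrac{-\lambda}{p}\bigr)\bigr)$ (and $0$ when $e$ is odd); the key point is that summing over the $x_2$ with a fixed such $j$ yields a contribution independent of $j$, so this range contributes (number of admissible $j$) times that contribution. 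If $e+2j=a$ (possible only when $2\mid a-e$), the analysis of part~(1) applies verbatim. If $e+2j>a$, then $c-\lambda p^e x_2^2\equiv c\bmod p^N$, so each such $x_2$ contributes $\rho_N(c)$, summed over a $p$-power count of $x_2$. Adding the three pieces — in the four parity cases for $(e,a)$, and simplifying sums of $p$-powers times $\bigl(1+\bigl(\tfrac{-\lambda}{p}\bigr)\bigr)$, $\bigl(1+\bigl(\tfrac{u}{p}\bigr)\bigr)$ or $\bigl(1+\bigl(\tfrac{\lambda u}{p}\bigr)\bigr)$ — produces the closed forms; the $c=0$ computation is identical except that the ranges $e+2v_p(x_2)\geq a$ merge into a single one contributing $\rho_a(0)=p^{\lfloor a/2\rfloor}$ per $x_2$.

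The square-root counts and the single character-sum evaluation are routine; the real obstacle is organizational — keeping the three valuation ranges, the sub-split at $\lambda w^2\equiv u$ on the boundary range, and the four $(e,a)$-parity cases straight, and then matching the resulting arithmetic (in particular the exponents of $p$) to the stated closed forms.
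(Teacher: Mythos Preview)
Your argument is correct, but the organization differs from the paper's. The paper proves part~(1) by applying Theorem~A with $J_1=\{2\}$, $J_2=I$: it decomposes over the valuation $j$ of $x_1$ and reduces to computing $N^*(p^{2j}x_1^2+\lambda p^e x_2^2;\,up^e,p^{e+1})$ for each $j$, which is handled via Theorem~B and the quadratic formula~\eqref{eq:quad3}. For part~(2) it applies Theorem~A with $J_1=\emptyset$, $J_2=\{2\}$, stratifies by $j=v_p(x_2)$, and feeds the resulting $N_{\{2\}}$-counts back into part~(1). Your part~(2) uses the same stratification by $v_p(x_2)$ and the same three ranges $e+2j\lessgtr a$, so there the two arguments are essentially identical. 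The genuine difference is in part~(1): you bypass Theorem~A entirely by summing over $x_2$ and invoking the packaged square-root count $\rho_N$ for $x_1$, which already absorbs the valuation decomposition of $x_1$. This trades the appeal to~\eqref{eq:quad3} for the single character-sum identity $\sum_{s\in\F_p}\bigl(\tfrac{s(u-\lambda s)}{p}\bigr)=-\bigl(\tfrac{-\lambda}{p}\bigr)$, which is equally standard. Your route is slightly more self-contained for this particular two-variable quadratic case; the paper's route is there to illustrate how the general algorithm of \S3.3 runs on a concrete example.
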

\begin{proof}
We use the decomposition formula in Theorem A to count the number.

(1) Take $J_1=\{2\}$ and $J_2=I$ in Theorem A, then the decomposition formula for $N_{\{2\}}(Q; up^e, p^{e+1})$ is
 \[ N_{\{2\}}(Q; up^e, p^{e+1})=\sum_{j=0}^e p^{-j} N^*(p^{2j}x_1^2+\lambda p^e x_2^2; up^e, p^{e+1})+N^*(\lambda p^e x_2^2; up^e, p^{e+1}). \]
If $j<e/2$, $N^*(p^{2j}x_1^2+\lambda p^e x_2^2; up^e, p^{e+1})=0$. If $j>e/2$,
 \[  \begin{split} N^*(p^{2j}x_1^2+\lambda p^e x_2^2; up^e, p^{e+1})=& p^e(p-1) N^*(\lambda p^e x_2^2; up^e, p^{e+1})\\ =& p^{2e}(p-1) (1+\bigl(\frac{\lambda u}{p}\bigr)). \end{split} \]
 If $j=e/2$, then
 \[ N^*(p^{2j}x_1^2+\lambda p^e x_2^2; up^e, p^{e+1})=p^{2e} (p-2-\bigl(\frac{-\lambda}{p}\bigr)-\bigl(\frac{u}{p}\bigr)
 -\bigl(\frac{\lambda u}{p}\bigr)). \]
Combine the results we get the formula for $N_{\{2\}}(Q; up^e, p^{e+1})$.

The decomposition formula for $N_{\{2\}}(Q; 0, p^{e+1})$ is
 \[ N_{\{2\}}(Q; 0, p^{e+1})=\sum_{j=0}^{e} p^{-j} N^*(p^{2j}x_1^2+\lambda p^e x_2^2; 0, p^{e+1})+N^*(\lambda p^e x_2^2; 0, p^{e+1}). \]
If $j\neq e/2$, $N^*(p^{2j}x_1^2+\lambda p^e x_2^2; 0, p^{a})=0$ and $N^*(\lambda p^e x_2^2; 0, p^{e+1})=0$; for $j=e/2$, $N^*(p^{2j}x_1^2+\lambda p^e x_2^2; 0, p^{e+1})=p^{2e} (p-1)(1+\bigl(\frac{-\lambda}{p}\bigr))$.
So we get the formula for $N_{\{2\}}(Q; 0, p^{e+1})$.

(2) Take $J_1=\emptyset$ and $J_2=\{2\}$, then the decomposition formula for $N(Q; up^a, p^{a+1})$ is
 \[ N(Q; up^a, p^{a+1})=\sum_{j=0}^a p^{-j}  N_{\{2\}}(x_1^2+\lambda p^{e+2j}x_2^2; up^a, p^{a+1})+ N(x_1^2; up^a, p^{a+1}). \]
If $j\geq (a+1-e)/2$, then
 \[ N_{\{2\}}(x_1^2+\lambda p^{e+2j}; up^a, p^{a+1})=p^a(p-1)N(x_1^2; up^a, p^{a+1}), \]
and $N(x_1^2; up^a, p^{a+1})=p^{a/2}(1+\bigl(\frac{u}{p}\bigr))$ if $2\mid a$ and $0$ if $2\nmid a$, so
 \[ \begin{split} \sum_{j\geq (a+1-e)/2} p^{-j} N_{\{2\}}(x_1^2&+\lambda p^{e+2jx_2^2}; up^a, p^{a+1})+ N(x_1^2; up^a, p^{a+1}) \\= &\begin{cases}p^{\frac{3a}{2}+1-\lceil \frac{a+1-e}{2}\rceil} (1+\left(\frac{u}{p}\right)), & \text{if}\ 2\mid a,\\ 0, & \text{if}\ 2\nmid a.    \end{cases} \end{split} \]
If $j< (a-e)/2$, then
 \[ N_{\{2\}}(x_1^2+\lambda p^{e+2j}x_2^2; up^a, p^{a+1})=p^{a-e-2j} N_{\{2\}}(x_1^2+\lambda p^{e+2j}x_2^2; 0, p^{e+2j+1}). \]
If $j=(a-e)/2$, then
 \[ N_{\{2\}}(x_1^2+\lambda p^{e+2j}x_2^2; up^a, p^{a+1})=N_{\{2\}}(x_1^2+\lambda p^{a}x_2^2; up^a, p^{a+1}). \]
We now can just use results in (1) to obtain the formula for $N(Q; up^a, p^{a+1})$.

The decomposition formula for $N(Q; 0, p^{a})$ is
 \[ N(Q; 0, p^{a})=\sum_{j=0}^{a-1} p^{-j} N_{\{2\}}(x_1^2+\lambda p^{e+2j}x_2^2; 0, p^{a})+ N(x_1^2; 0, p^{a}). \]
If $j\geq (a-e)/2$, then
 \[  \begin{split} N_{\{2\}}(x_1^2+\lambda p^{e+2j}x_2^2; 0, p^{a})=& p^{a-1}(p-1)N(x_1^2; 0, p^{a})\\
 =& p^{2a-\lceil\frac{a}{2}\rceil-1}(p-1). \end{split} \]
If $j<(a-e)/2$, then
 \[ N_{\{2\}}(x_1^2+\lambda p^{e+2j}x_2^2; 0, p^{a})=p^{a-e-2j-1} N_{\{2\}}(x_1^2+\lambda p^{e+2j}; 0, p^{e+2j+1}) \]
which is given by formulas in (1). Combine these results, we get the formula for $N(Q; 0, p^{a})$.
\end{proof}

\begin{remark}
For completeness, let us study $N_J(Q; c,2^a)$ for $Q(x_1, x_2)=x_1^2+ 2^e \lambda x_2^2$ and $2\nmid \lambda$. The cases $J=\{1\}$ and  $\{1,2\}$  are given in part (3) of Theorem~\ref{theo:quad}. Here we give steps to compute  $N_J(Qc,2^a)$ for $J=\{2\}$ or  $\emptyset$.

(1) We first compute $N(x_1^2; c, 2^a)$. Assume that $c=2^{c_2}u$  with $u$ odd for $c\neq 0$. Then
\begin{itemize}
\item if $c=0$ or $c_2\geq a$,  $N(x_1^2;0,2^a)=2^{a-\lceil\frac{a}{2}\rceil}$;
\item if $a\geq c_2+3$, $N(x_1^2; c,2^a)=N(x_1^2; c,2^{c_2+3})$ (by Theorem C);
 \item if $c_2+1\leq a \leq c_2+3$, $N(x_1^2; c,2^a)=
2^{a-\frac{c_2}{2}-1}$ if $2\mid c_2$ and $ u\equiv 1 \pmod{ 2^{a-c_2}}$ or $0$ if otherwise.
\end{itemize}

(2) For $J=\{2\}$, if $a> e+3$, by Theorem B, we have
\[N_{\{2\}}(Q; c,2^a)=2^{a-e-3}N_{\{2\}}(Q; c, 2^{e+3}).\]
If $a \leq e+3$, since $2^ex_2^2\equiv 2^e\pmod{2^a}$ for any $x_2 \in (\Z/2^a\Z)^\times$,
\[N_{\{2\}}(Q; c,2^a)= 2^{a-1}N(x_1^2; c-  2^{e}\lambda, 2^a)\]
with $N(x_1^2; c- 2^{e}\lambda, 2^a)$ be given in part (1).

(3) For $J=\emptyset$, by the decomposition formula in Theorem A, we have
\[ N(Q; c, 2^{a})=\sum_{j=0}^{a-1} 2^{-j} N_{\{2\}}(x_1^2+\lambda 2^{e+2j}x_2^2; c, 2^{a})+ N(x_1^2; c, 2^{a}), \]
where $N_{\{2\}}(x_1^2+\lambda p^{e+2j}x_2^2; c, 2^{a})$ is given in part (2) and $N(x_1^2; c, 2^{a})$ is given in part (1).
\end{remark}

For the general case, we have

\begin{proposition} Let $Q(x_1, x_2)=x^k_1+\lambda p^e x_2^k$
such that $p\nmid \lambda k$ and $e\geq 0$.  Let $C=N(x^k_1+\lambda x_2^k; u,p)$ and $C_0^*=N(x^k_1+\lambda x_2^k; 0,p)-1$ given by \eqref{eq:ncpt2} and \eqref{eq:n0pt2} respectively. Then

(1) For $u\in \{1,\cdots, p-1\}$,
 \begin{equation} \label{eq:47} N_{\{2\}}(up^e,p^{e+1})=\begin{cases}
 p^{2e-[\frac{e}{k}]}\sum \chi(u), & \text{if} \ k \nmid e;\\
 p^{\frac{(2k-1)e}{k}}(C-\sum \chi(u)), & \text{if} \ k \mid e.
\end{cases}
\end{equation}
For $u=0$,
\begin{equation} \label{eq:48} N_{\{2\}}(0,p^{e+1})=\begin{cases}
0, & \text{if} \ k \nmid e; \\
p^{\frac{(2k-1)e}{k}}C^*_0, & \text{if} \ k \mid e.
\end{cases}
\end{equation}

(2) For $u\in \{1,\cdots, p-1\}$ and  $a \geq e$,
\begin{equation}  N(up^{a},p^{a+1})=\begin{cases}
p^{\frac{2a(k-1)+e}{k}}C+\frac{p^{\frac{2a(k-1)+e}{k}}
-p^{\frac{ak+e(k-1)}{k}}}{p^{k-2}-1}C_0^*, & \text{if} \ k \mid e \ and \  k \mid a ; \\
p^{\frac{ak+e(k-1)}{k}}\cdot\frac{p^{(k-2)\lceil
\frac{a-e}{k}\rceil}-1}{p^{k-2}-1}C^*_0, & \text{if} \ k \mid e \ and \  k \nmid a ; \\
p^{\frac{(2k-1)a}{k}-[\frac{a-e}{k}]}\sum \chi(u), & \text{if} \ k \nmid e \ and \  k \mid a; \\
p^{\frac{(2k-1)a+e}{k}-[\frac{a}{k}]}\sum \chi(u), & \text{if} \ k \nmid e \ and \  k \mid a-e ; \\
0, & otherwise.
\end{cases}  \end{equation}
For $e<a$,
 \begin{equation} N(0,p^{a}) =\begin{cases}
p^{2a-\lceil\frac{a-e}{k}\rceil+\lceil\frac{a}{k}\rceil}+
p^{a+e-1-\frac{e}{k}}\cdot
\frac{p^{(k-2)\lceil\frac{a-e}{k}\rceil}-1}{p^{k-2}-1}C^*_0, & \text{if} \ k \mid e; \\
p^{2a-\lceil\frac{a-e}{k}\rceil+\lceil\frac{a}{k}\rceil}, & \text{if} \ k \nmid e.
\end{cases}  \end{equation}
Here the sum $\sum$ is over all characters $\chi$ such that $\chi^k=1$, and $[n]$ means the largest integer $\leq n$.
\end{proposition}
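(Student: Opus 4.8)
The plan is to reduce everything to the decomposition formula of Theorem~A together with the ``base cases'' already computed in Propositions giving $N_{\{2\}}$ and the facts from Theorems~B and~C. Concretely, for part (1) I would take $J_1=\{2\}$, $J_2=I=\{1,2\}$ in Theorem~A so that
\[
 N_{\{2\}}(Q; up^e, p^{e+1})=\sum_{j=0}^{e} p^{-j}\, N^*\bigl(p^{kj}x_1^k+\lambda p^e x_2^k;\, up^e,\, p^{e+1}\bigr)
 \;+\; N^*\bigl(\lambda p^e x_2^k;\, up^e,\, p^{e+1}\bigr).
\]
The key observation is that the $j$-th term depends only on how $kj$ compares with $e$: if $kj>e$ then $p^{kj}x_1^k\equiv 0\bmod p^{e+1}$ so the term collapses to $p^{e}(p-1)\,N^*(\lambda p^e x_2^k; up^e, p^{e+1})$ (the $x_1$-variable now ranges freely over $\Z/p^{e+1}\Z$), and $N^*(\lambda p^e x_2^k; up^e, p^{e+1})=p^{e}\,N^*(\lambda x_2^k; u, p)=p^e\sum_{\chi^k=1}\chi(\lambda^{-1}u)$ by Proposition~\ref{prop:easy123}(2) and Theorem~\ref{theorem:power3}. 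If $kj<e$ then $p^e x_2^k\equiv 0$ and $p^{kj}x_1^k\not\equiv 0$, forcing $v_p$ of the left side to be a multiple of $k$ that is $<e$, so the term contributes only when $k\mid e$ is violated in a controlled way; and the single ``critical'' value $kj=e$ (which exists exactly when $k\mid e$) contributes $N^*(x_1^k+\lambda x_2^k$, after rescaling, $; u, p)$ times $p$-powers, i.e. a copy of $C$ minus the degenerate solutions where $x_1^k\equiv 0$. Bookkeeping the geometric progression in $j$ over the ``$kj>e$'' range then produces the stated powers of $p$ and the residue-character sums $\sum\chi(u)$; the $u=0$ case is identical except the critical term is $C_0^*=N(x_1^k+\lambda x_2^k;0,p)-1$ and the free-$x_1$ terms vanish since $x_2^k\not\equiv 0$ cannot make a unit-times-$p^e$ equal $0$.

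For part (2) I would take $J_1=\emptyset$, $J_2=\{2\}$ in Theorem~A, giving
\[
 N(Q; up^a, p^{a+1})=\sum_{j=0}^{a} p^{-j}\, N_{\{2\}}\bigl(x_1^k+\lambda p^{e+2kj/?}\cdots\bigr)\cdots
\]
— more precisely the decomposition replaces $x_2$ by $p^{j}x_2'$, turning $\lambda p^e x_2^k$ into $\lambda p^{e+kj}x_2'^k$, so
\[
 N(Q; up^a, p^{a+1})=\sum_{j=0}^{a} p^{-j}\, N_{\{2\}}\bigl(x_1^k+\lambda p^{e+kj}x_2^k;\, up^a,\, p^{a+1}\bigr)\;+\; N\bigl(x_1^k;\, up^a,\, p^{a+1}\bigr).
\]
Now I split the $j$-range at the threshold $e+kj\gtrless a$. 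For $e+kj>a$ the term $\lambda p^{e+kj}x_2^k\equiv 0\bmod p^{a+1}$, so $N_{\{2\}}(\cdots)=p^a(p-1)\,N(x_1^k; up^a, p^{a+1})$, and $N(x_1^k;up^a,p^{a+1})$ is the elementary count already recorded ($=p^{a-a/k}\sum\chi(u)$ when $k\mid a$, $=0$ otherwise up to the $c_p\ge a$ subtlety), summed as a geometric series in $j$. For $e+kj<a$ we have $x_1^k\not\equiv 0$ forced, and $N_{\{2\}}(x_1^k+\lambda p^{e+kj}x_2^k; up^a, p^{a+1})=p^{a-e-kj}\,N_{\{2\}}(x_1^k+\lambda p^{e+kj}x_2^k; 0, p^{e+kj+1})$ by Theorem~B (the lifting formula once past the depth), and this inner quantity is exactly the already-proved formula~\eqref{eq:48} with $e$ replaced by $e+kj$ — hence contributes a term proportional to $C_0^*$ summed over the relevant $j$, producing the factor $\frac{p^{(k-2)\lceil (a-e)/k\rceil}-1}{p^{k-2}-1}$. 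The single boundary value $e+kj=a$ (present iff $k\mid a-e$) contributes $N_{\{2\}}(x_1^k+\lambda p^a x_2^k; up^a, p^{a+1})$, read off from~\eqref{eq:47}. Collecting the three pieces and simplifying the powers of $p$ (all of which reduce to arithmetic with $\lceil\,\cdot\,\rceil$ and $[\,\cdot\,]$ of $a/k$, $e/k$, $(a-e)/k$) yields the four stated cases; $N(0,p^a)$ is the same computation with the geometric sums starting from $j=0$ and the critical term being a pure $x_1^k$ count, plus the $C_0^*$-tail.

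The main obstacle, as usual in this type of problem, is neither conceptual nor the use of Theorems~A--C — it is the \emph{careful exponent accounting} in the geometric sums: one must track precisely which $j$ lie in each of the three regimes ($e+kj<a$, $=a$, $>a$), interchange the roles of $\lceil\cdot\rceil$ and $[\cdot\,]$ correctly depending on divisibility of $a$ and $e$ by $k$, and verify that the telescoping of $\sum_j p^{-j}\cdot p^{(2k-1)(e+kj)/k}C_0^*$ collapses to the displayed closed form $\frac{p^{(k-2)\lceil(a-e)/k\rceil}-1}{p^{k-2}-1}C_0^*$. A secondary subtlety is the interaction with the ``$c_p\ge a$'' degeneracy in $N(x_1^k;up^a,p^{a+1})$: one must confirm that for $u$ a unit this never occurs at level $p^{a+1}$ precisely when $c_p=a<a+1$, so the elementary formula applies cleanly, and that the case distinctions $k\mid a$ versus $k\mid a-e$ are mutually exclusive exactly when $k\nmid e$. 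Once these are handled the remaining steps are the routine substitutions and simplifications, so I would present the proof by (i) writing down the decomposition, (ii) isolating the three $j$-regimes with a lemma identifying each contribution via the already-proved Propositions and Theorem~B, and (iii) summing, relegating the exponent arithmetic to a short explicit computation.
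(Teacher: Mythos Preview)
Your approach is essentially identical to the paper's: for part~(1) it too takes $J_1=\{2\}$, $J_2=I$ in Theorem~A (referring back to the quadratic Proposition as the template), and for part~(2) it takes $J_1=\emptyset$, $J_2=\{2\}$, splits the $j$-sum into the three regimes $e+kj<a$, $=a$, $>a$, feeds in \eqref{eq:47}--\eqref{eq:48} and the elementary one-variable count, and sums the resulting geometric series. One small slip to clean up in your write-up: in the $kj<e$ case of part~(1) you say ``$p^e x_2^k\equiv 0$,'' but at level $p^{e+1}$ this is \emph{not} zero; the correct reason those terms vanish is that $p^{kj}x_1^k$ has exact valuation $kj<e$ while $up^e-\lambda p^e x_2^k$ has valuation $\ge e$, so no unit $x_1$ works.
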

\begin{proof}
The proof of part (1) is similar to the proof of  Proposition~\ref{prop:4.7}. We just show how to get the formulas of part (2).

Take $J_1=\emptyset$ and $J_2=\{2\}$, then the decomposition formula for $N(Q; up^a, p^{a+1})$ is
 \[ N(Q; up^a, p^{a+1})=\sum_{j=0}^a p^{-j}  N_{\{2\}}(x_1^k+\lambda p^{e+kj}x_2^k; up^a, p^{a+1})+ N(x_1^k; up^a, p^{a+1}). \]
If $e+kj> a$, i.e. $j\geq [\frac{a-e}{k}]+1$, then
 \[N_{\{2\}}(x_1^k+\lambda p^{e+kj}; up^a, p^{a+1})=p^{a}(p-1)N(x_1^k; up^a, p^{a+1}), \]
and $N(x_1^k; up^a, p^{a+1})=p^{a-\frac{a}{k}}\sum \chi(u)$ if $k\mid a$ and $0$ if $k\nmid a$, so
 \[ \begin{split} \sum_{j=[\frac{a-e}{k}]+1}^a p^{-j} N_{\{2\}}(x_1^k&+\lambda p^{e+kj}x_2^k; up^a, p^{a+1})+ N(x_1^k; up^a, p^{a+1}) \\= &\begin{cases}p^{2a-\frac{a}{k}-[\frac{a-e}{k}]}\sum \chi(u), & \text{if}\ k\mid a;\\ 0, & \text{if}\ k \nmid a.    \end{cases} \end{split} \]
If $e+kj< a$, i.e. $j\leq \lceil\frac{a-e}{k}\rceil-1$, then
 \[ N_{\{2\}}(x_1^k+\lambda p^{e+kj}x_2^k; up^a, p^{a+1})=p^{a-e-kj} N_{\{2\}}(x_1^k+\lambda p^{e+kj}x_2^k; 0, p^{e+kj+1}). \]
By \eqref{eq:48}, we have
 \[ \begin{split} \sum_{j=0}^{\lceil\frac{a-e}{k}\rceil-1} &p^{-j} N_{\{2\}}(x_1^k+\lambda p^{e+kj}x_2^k; 0, p^{a})\\= &\begin{cases}p^{a+e-\frac{e}{k}}\cdot\frac{p^{(k-2)\lceil\frac{a-e}{k}\rceil}-1}{p^{k-2}-1}C_0^*, & \text{if}\ k\mid e; \\ 0, & \text{if}\ k \nmid e.    \end{cases} \end{split} \]
If $e+kj=a$, i.e $j=\frac{a-e}{k}$, then by \eqref{eq:47} we have
 \[ \begin{split} p^{-j} N_{\{2\}}(&x_1^k+\lambda p^{a}x_2^k; up^a, p^{a+1})\\=
 &\begin{cases}p^{2a-\frac{a-e}{k}-\frac{a}{k}}(C-\sum \chi(u)), & \text{if}\ k\mid a \ and \ k\mid a-e; \\
 p^{2a-\frac{a-e}{k}-[\frac{a}{k}]}\sum \chi(u), & \text{if}\ k\nmid a \ and \ k\mid a-e.   \end{cases} \end{split} \]
Thus we get the formula for $N(Q; up^{a}, p^{a+1})$.

The decomposition formula for $N(Q; 0, p^{a})$ is
 \[ N(Q; 0, p^{a})=\sum_{j=0}^{a-1} p^{-j} N_{\{2\}}(x_1^k+\lambda p^{e+kj}x_2^k; 0, p^{a})+ N(x_1^k; 0, p^{a}). \]
If $e+kj\geq a$, i.e. $j\geq \lceil\frac{a-e}{k}\rceil$ then
 \[  \begin{split} N_{\{2\}}(x_1^k+\lambda p^{e+kj}x_2^k; 0, p^{a})=& p^{a-1}(p-1)N(x_1^k; 0, p^{a})\\
 =& p^{2a-\lceil\frac{a}{k}\rceil-1}(p-1). \end{split} \]
 so
\[\sum_{j=\lceil\frac{a-e}{k}\rceil}^{a-1} p^{-j} N_{\{2\}}(x_1^k+\lambda p^{e+kj}x_2^k; 0, p^{a})+ N(x_1^k;0, p^{a})= p^{2a-\lceil\frac{a}{k}\rceil-\lceil\frac{a-e}{k}\rceil}\]
If $e+kj< a$, i.e. $j\leq \lceil\frac{a-e}{k}\rceil-1$, then
 \[ N_{\{2\}}(x_1^k+\lambda p^{e+kj}x_2^k; 0, p^{a})=p^{a-e-2j-1} N_{\{2\}}(x_1^k+\lambda p^{e+kj}x_2^k; 0, p^{e+2j+1}) \]
and
   \[ \begin{split} \sum_{j=0}^{\lceil\frac{a-e}{k}\rceil-1} p^{-j} &N_{\{2\}}(x_1^k+\lambda p^{e+kj}x_2^k; 0, p^{a+1})\\= &\begin{cases}p^{a+e-\frac{e}{k}-1}\cdot\frac{p^{(k-2)\lceil
   \frac{a-e}{k}\rceil}-1}{p^{k-2}-1}C_0^*, & \text{if}\ k\mid e; \\ 0, & \text{if}\ k \nmid e.    \end{cases} \end{split} \]
thus we get the formula for $N(Q; 0, p^{a})$.
\end{proof}

\begin{remark}
The case $t\geq 3$ can also be computed, but the discussion is a little bit tedious.
\end{remark}

\subsection{The example $Q(x_1,x_2,x_3)=9x_1+3x_2^3+x_3^9$ for $p=3$.}
At last we consider the congruence equation
\begin{align*}
  Q(x_1,x_2,x_3)=9x_1+3x_2^3+x_3^9\equiv c \bmod{3^a},  \ (a \geq 3).
\end{align*}
which is not included in the algorithm.

For $c\neq 0$, write $c=3^{c_3}c'$; for $c=0$ set $c_3=+\infty \geq a$ for any $a$. Since for any $J\neq \emptyset$, $d_{3,J}=3$, by Theorem B, we have
 \[ N_J(Q; c, 3^a)=3^{2(a-3)} N_J(Q; c, 27). \]
After simple calculation, we then get $N_J(Q; c,27)$ in Table 1.

\begin{table}[!htbp]
\caption{$N_J(c,27)$ for $J$ nonempty}
 \begin{tabular}{||r||r|r|r|r|r|r|r|r||}
     \hline
     \hline
       $c'$ &0 & 1, & 3, & 9, & 2,4,  & 8,10, & 6,12, & else \\
        &  &  26 & 24  &  18 &  23,25 &  17,19 & 15,21 &  \\
       \hline \hline
       $N^*(c,27)$  & 0 & 0 & 0 & 0 & 0 & 0 & 0 & $3^6$ \\ \hline
       $N_{\{1,2\}}(c,27)$ & 0 & 0 & 0 & 0 & 0 & 0 & $3^6$ & $3^6$ \\ \hline
       $N_{\{1,3\}}(c,27)$ & 0 & 0 & 0 & 0 & 0 & $3^6$ & 0 & $3^6$ \\ \hline
       $N_{\{2,3\}}(c,27)$ & 0 & 0 & 0 & 0 & $3^6$ & 0 & 0 & $3^6$ \\ \hline
       $N_{\{1\}}(c,27)$ & 0 & 0 & 0 & $3^6$ & 0 & $3^6$ & $3^6$ & $3^6$ \\ \hline
       $N_{\{2\}}(c,27)$ & 0 & 0 & $3^6$ & 0 & $3^6$ & 0 & $3^6$ & $3^6$ \\ \hline
       $N_{\{3\}}(c,27)$ & 0 & $3^6$ & 0 & 0 & $3^6$ & $3^6$ & 0 & $3^6$\\
       \hline
   \end{tabular}
\end{table}

For $J=\emptyset$, the map $\varphi_3: (a_1, a_2, a_3)\mapsto Q(\alpha_1,\alpha_2, \alpha_3) \bmod{27}$ from $(\Z/3\Z)^3$ to $\Z/27\Z$ is found to be one-to-one. Note that any solution $(\beta_1,\beta_2,\beta_3)\in \Gamma(Q;c,27)$ is a lifting of some $(a_1,a_2,a_3)\in \varphi_3^{-1}(c)$, but we always have
\[Q(\beta_1,\beta_2,\beta_3)= \varphi_3(a_1,a_2,a_3).\]
Thus for any $c\in \Z$, we have $N(Q; c, 27)=3^6$. In fact, we have $N(Q; c, 3^a)=3^{2a}$ for $a\leq 3$. For the case $a>3$, we use the notation $N_{J_1,J_2}$ introduced in the remark of \S\ref{subsec:31}, then
\[N(c,3^a)=N_{\emptyset,\{2,3\}}(c,3^a)+N_{\{2\},\{3\}}(c,3^a)
+N_{\{3\},\{2\}}(c,3^a)+N_{\{2,3\}}(c,3^a).\]
We compute the right hand side term by term:
\begin{itemize}
\item if $c_3=0$, then $N_{\emptyset,\{2,3\}}=N_{\{2\},\{3\}}=0$, $N_{\{3\},\{2\}}=3^{2a}$ for $c'\equiv 1,8,10,17,19,26(\bmod 27)$, and $N_{\{2,3\}}=3^{2a}$ for $c'\equiv 2,4,5,7,11,13,\\14,16,20,22,23,25(\bmod 27)$ from Table 1;
\item if $c_3=1$, then $N_{\emptyset,\{2,3\}}=N_{\{3\},\{2\}}=N_{\{2,3\}}=0$, and $N_{\{2\},\{3\}}=3^{2a}$;
\item if $c_3\geq2$, $N_{\{2\},\{3\}}=N_{\{3\},\{2\}}=N_{\{2,3\}}=0$, and $N_{\emptyset,\{2,3\}}=3^{2a}$.
\end{itemize}
Thus we have
 \[ N(Q; c, 3^a)=3^{2a} \]
 for any $a>0$.

 \subsection*{Acknowledgement}
Research is partially
supported by National Key Basic Research Program of China (Grant No. 2013CB834202) and National Natural Science Foundation of China (Grant No. 11571328).

\end{document}